\theoremstyle{plain}
\newtheorem{lema}{Lemma}[section]
\newtheorem{prop}[lema]{Proposition}
\newtheorem{teo}[lema]{Theorem}
\newtheorem{coro}[lema]{Corollary}
\theoremstyle{remark}
\theoremstyle{definition}
\newtheorem{defi}[lema]{Definition}
\newtheorem{ej}[lema]{Example}
\def\QQ{{\mathbb Q}}
\def\a{{\mathrm{a}}}
\def\w{{\mathrm{w}}}
\newcommand{\Si}{\Sigma}
\newcommand{\si}{\sigma}
\begin{document}

\title[The Equivariant Cohomology of Weighted Flag Orbifolds]{The
Equivariant Cohomology of Weighted Flag Orbifolds}
\author[H. Azam]{Haniya Azam}
\author[S. Nazir]{Shaheen Nazir}
\author[M.I. Qureshi]{Muhammad Imran Qureshi}
\address{Haniya Azam, Department of Mathematics, SBASSE\\
Lahore University of Management Sciences (LUMS)\\
Lahore, Pakistan}
\email{haniya.azam@lums.edu.pk}
\address{Shaheen Nazir, Department of Mathematics, SBASSE\\
Lahore University of Management Sciences (LUMS)\\
Lahore, Pakistan}
\email{shaheen.nazir@lums.edu.pk}
\address{Muhammad Imran Qureshi, Department of Mathematics, SBASSE Lahore
University of Management Sciences (LUMS)\\
Lahore, Pakistan and Mathematisches Institut, Universit\"at T\"ubingen,
Germany}
\email{i.qureshi@maths.oxon.org}

\begin{abstract}
We describe the torus-equivariant cohomology of weighted partial flag
orbifolds ${\mathrm{w}}\Sigma$ of type $A$. We establish counterparts of
several results known for the partial flag variety that collectively
constitute what we refer to as ``Schubert Calculus on ${\mathrm{w}}\Sigma$%
''. For the weighed Schubert classes in ${\mathrm{w}}\Sigma$, we give the
Chevalley's formula. In addition, we define the weighted analogue of double
Schubert polynomials and give the corresponding Chevalley--Monk's formula.
\end{abstract}

\keywords{Weighted flag varieties, equivariant cohomology, Schubert classes,
double Schubert polynomials}
\maketitle




\section{Introduction}

A flag variety is the quotient of a reductive Lie group $G$ by a unique
parabolic subgroup $P$ (up to conjugation) that is,
\begin{equation*}
\Sigma=G/P.
\end{equation*}
Alternatively, it can be described as a projective subvariety of the
projectivization of some irreducible $G$-representation. The notion of a
weighted flag variety (WFV) ${\mathrm{w}}\Sigma$, is the weighted projective
analogue of the flag variety, introduced by Grojnowski, Corti and Reid \cite%
{C&RweightedGrass}. Any WFV is locally covered by open sets which are
quotients of affine spaces by finite groups, giving it the structure of an
orbifold. Thus we use the terms variety and orbifold interchangeably
throughout the paper. Ever since their introduction, various types of WFVs
have been used to serve as ambient varieties to construct some interesting
classes of polarized orbifolds such as canonical Calabi-Yau 3-folds,
log-terminal Fano 3-folds and canonical 3-folds etc. (see\cite%
{C&RweightedGrass,qs,qs2,qs-ahep,brown2014gorenstein,QJSC,QJGP}). On the
topological side, the equivariant cohomology of weighted Grassmannians has
been computed by Abe and Matsumura in \cite{A&MweightedGrass}.

An important early formal reference on the topology of certain homogeneous
spaces, in particular the flag manifolds, dates back to 1934 by Ehresmann
\cite{ehresmann1934topologie}. Borel in his fundamental work on transformation groups \cite%
{borel2016seminar} defined what is now called the equivariant cohomology of
a space with some group action defined on it.
In his work Borel applied spectral sequence to the topology of Lie groups
and their classifying spaces showing that they degenerate to equivariant
cohomology of homogeneous spaces. In \cite{chang1973topological}, Chang and
Skjelbred proposed the idea of restricting attention to one-dimensional
orbits for calculating equivariant cohomology. Berligne and Vergne \cite%
{berline1983zeros} gave the localization theorem in the context of moment
map, a point of view also adopted by Attiyah and Bott in \cite%
{atiyah1994moment}. An integration of these ideas in terms of the
equivariant cohomology ring of `equivariantly formal spaces' (for instance,
homogeneous spaces) was given by Goresky, Kottwitz and MacPherson in \cite%
{goresky1997equivariant}. They defined the `equivariantly formal spaces' as
spaces whose $G$-equivariant cohomology can be computed by restricting
attention to fixed points and one-dimensional orbits of the maximal torus
inside $G$. The first computation of equivariant cohomology for the complete
flag variety was given by Arabia in \cite{Arabia1986}. Afterwards, many
people used \cite{goresky1997equivariant} to describe equivariant cohomology
of different spaces. The GKM description of the equivariant cohomology ring
of general homogenous spaces has been given by Guillemin, Holm and Zara in
\cite{guillemin2006gkm} and for partial flag varieties of type $A$, by
Tymoczko in \cite{Jul09}.

We compute the rational torus-equivariant cohomology of weighted flag
orbifolds ${\mathrm{w}}\Sigma$ of type $A$, generalizing \cite%
{A&MweightedGrass} to the case of weighted partial flag varieties. We
generalize some known results for partial flag varieties to WFVs, which
includes providing the GKM description of the cohomology ring, a Chevalley's
formula for ${\mathrm{w}}\Sigma$ and the corresponding Chevalley--Monk's
formula in terms of weighted Schubert polynomials defined later.

In \S \ref{S-Rep} we lay the representation theoretic foundations needed in
the rest of the paper. We recall the precise relation between a parabolic
subgroup $P$ and an irreducible representation $V_\chi$ of $G$, where $\chi$
is the highest weight. For a given choice of torus $T$ inside $G$, we
explicitly describe the $T$-invariant basis of the highest weight
representation $V_{\chi}$ using Deyrut's construction of Schur modules. This
construction allows us to explicitly compute the weights of the
representation $V_\chi$.

In \S \ref{S-EqCoh} we recall the Bruhat order on Schubert cells and
describe the open charts and weighted cell decomposition of ${\mathrm{w}}%
\Sigma$ using a $G$-equivariant Pl\"{u}cker type embedding of ${\mathrm{w}}%
\Sigma$. These open charts are all isomorphic to a quotient of a complex
Euclidean space by some finite cyclic group. We give an explicit formula to
compute the ranks of singular rational cohomology groups of these WFVs using
Borel--Moore homology. As a consequence of equivariant formality we show
that the equivariant cohomology ring of ${\mathrm{w}}\Sigma$ admits a basis
over the equivariant cohomology of torus-fixed points.

\begin{teo}
There is an $H^*(BT_{\mathrm{w}})$-module isomorphism
\begin{equation*}
H^ { * } _ {T _ { {\mathrm{w}} } } ( {\mathrm{w}}\Sigma ) \cong H ^ { * } (
B T _ { {\mathrm{w}} } ) \otimes _ { {\mathbb{Q}} } H ^ { * } ( {\mathrm{w}}%
\Sigma ),
\end{equation*}
where $H^*_{T_{{\mathrm{w}}}}(.)$ is the $T_{{\mathrm{w}}}$-equivariant
cohomology and $BT_{{\mathrm{w}}}$ is the classifying space of $T_{{\mathrm{w%
}}}(\cong T)$. In fact, $H^ { * } _ {T _ { {\mathrm{w}} } } ( {\mathrm{w}}%
\Sigma )$ is a free module.
\end{teo}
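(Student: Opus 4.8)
The plan is to prove that ${\mathrm{w}}\Sigma$ is \emph{equivariantly formal} for the $T_{\mathrm{w}}$-action, from which both the module isomorphism and the freeness follow at once. The whole statement reduces to the single fact that the rational cohomology of ${\mathrm{w}}\Sigma$ is concentrated in even degrees, i.e. $H^{\mathrm{odd}}({\mathrm{w}}\Sigma;\QQ)=0$. This is exactly what the weighted cell decomposition of \S\ref{S-EqCoh} supplies: ${\mathrm{w}}\Sigma$ is a finite union of cells, each isomorphic to the quotient of a complex affine space $\CC^{d}$ by a finite cyclic group, and over $\QQ$ such a quotient has the rational cohomology of a point while contributing only in the even real dimension $2d$. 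Thus the Borel--Moore rank formula places every generator in an even degree, and $H^*({\mathrm{w}}\Sigma;\QQ)$ is a finite-dimensional graded $\QQ$-vector space supported in even degrees.

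Next I would pass to the Borel construction and write down the associated fibration
\[
{\mathrm{w}}\Sigma \hookrightarrow E T_{\mathrm{w}}\times_{T_{\mathrm{w}}}{\mathrm{w}}\Sigma \longrightarrow B T_{\mathrm{w}},
\]
whose total space computes $H^*_{T_{\mathrm{w}}}({\mathrm{w}}\Sigma)$ by definition. Since $B T_{\mathrm{w}}\simeq(\CC\PP^{\infty})^{\dim T}$ is simply connected, the local system on the base is trivial and the Leray--Serre spectral sequence reads
\[
E_2^{p,q}=H^p(B T_{\mathrm{w}})\otimes_{\QQ}H^q({\mathrm{w}}\Sigma)\ \Longrightarrow\ H^{p+q}_{T_{\mathrm{w}}}({\mathrm{w}}\Sigma).
\]
Because $H^*(B T_{\mathrm{w}})$ is a polynomial ring it too is concentrated in even degrees, so by the previous paragraph $E_2^{p,q}=0$ unless both $p$ and $q$ are even.

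I would then collapse the spectral sequence by a parity count. A differential $d_r\colon E_r^{p,q}\to E_r^{p+r,\,q-r+1}$ can be nonzero only if both bidegrees are of even--even type; but $p+r$ even forces $r$ even while $q-r+1$ even forces $r$ odd, so $d_r=0$ for every $r\ge 2$ and $E_\infty=E_2$. The associated graded of $H^*_{T_{\mathrm{w}}}({\mathrm{w}}\Sigma)$ is therefore $H^*(B T_{\mathrm{w}})\otimes_{\QQ}H^*({\mathrm{w}}\Sigma)$; since the $E_\infty$-page is a free $H^*(B T_{\mathrm{w}})$-module, the filtration on $H^*_{T_{\mathrm{w}}}({\mathrm{w}}\Sigma)$ splits, giving the stated $H^*(B T_{\mathrm{w}})$-module isomorphism. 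Freeness is then immediate: lifting any homogeneous $\QQ$-basis of $H^*({\mathrm{w}}\Sigma)$ produces an $H^*(B T_{\mathrm{w}})$-basis of $H^*_{T_{\mathrm{w}}}({\mathrm{w}}\Sigma)$. I expect the genuine content to lie entirely in the first step --- confirming that the finite-quotient (orbifold) cells contribute only even-degree rational classes and no torsion to obstruct the decomposition --- which is precisely where passing to $\QQ$-coefficients rather than $\ZZ$-coefficients is indispensable.
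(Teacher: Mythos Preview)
Your argument is correct and follows exactly the paper's route: the key input is the vanishing of odd rational cohomology of ${\mathrm{w}}\Sigma$ coming from the weighted cell decomposition, after which equivariant formality gives the result. The paper simply invokes \cite[Theorem~14.1]{goresky1997equivariant} for the last step, whereas you spell out the underlying Leray--Serre parity argument; the content is the same.
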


Apart from giving the module structure we also describe the equivariant
cohomology ring of ${\mathrm{w}}\Sigma$ following Kirwan \cite{Kirwan}. We
use the explicit description of ${\mathrm{w}}\Sigma$ as the quotient of a
compact real symplectic submanifold of the punctured affine cone ${\mathrm{a}%
}\Sigma^\times$ by a Hamiltonian action of the real torus inside ${\mathbb{C}%
}^\times.$

\begin{teo}
There exists a compact real symplectic submanifold $M$ of the punctured
affine cone ${\mathrm{a}}\Sigma^{\times}$ such that
\begin{equation*}
H_{T_{{\mathrm{w}}}}^*({\mathrm{w}}\Sigma)\cong H^*_{S_{T_{{\mathrm{w}}%
}}}(M/S^1) .
\end{equation*}
\end{teo}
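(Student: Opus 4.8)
The plan is to realize $\mathrm{w}\Sigma$ as a symplectic reduction of the punctured affine cone and then transport equivariant cohomology from the complex torus to its maximal compact subtorus, adapting the weighted Grassmannian argument of \cite{A&MweightedGrass} and the reduction formalism of \cite{Kirwan}. Recall from \S\ref{S-EqCoh} that $\mathrm{w}\Sigma = \mathrm{a}\Sigma^\times / \Cstar$, where $\Cstar$ acts through the chosen weighting, and that $\mathrm{a}\Sigma^\times$ sits inside $V_\chi \setminus \{0\}$. First I would equip $\mathrm{a}\Sigma^\times$ with the K\"ahler form $\om$ induced from the flat Hermitian form on $V_\chi$. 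The maximal compact subgroup $S^1 \subset \Cstar$ then acts on $(\mathrm{a}\Sigma^\times, \om)$ in a Hamiltonian fashion, and I would write down its moment map $\mu \colon \mathrm{a}\Sigma^\times \to \mathbb{R}$ explicitly as the weighted norm-square in the $T$-invariant basis of $V_\chi$ produced in \S\ref{S-Rep}.

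Next I would fix a regular value $c$ of $\mu$ and set $M := \mu^{-1}(c)$, a compact real submanifold of the symplectic manifold $(\mathrm{a}\Sigma^\times, \om)$. Because every weight of the action is positive, $\mu$ is a positive-definite weighted norm, hence proper; its level set $M$ is therefore compact, $S^1$-invariant, and disjoint from the origin. The core step is the Kempf--Ness / symplectic-GIT identification: the inclusion $M \into \mathrm{a}\Sigma^\times$ induces a homeomorphism $M/S^1 \iso \mathrm{a}\Sigma^\times / \Cstar = \mathrm{w}\Sigma$, which becomes an isomorphism of symplectic orbifolds once $M/S^1$ is given its reduced form. To prove this I would check that $\mu$ increases strictly along the radial part of each $\Cstar$-orbit, so that the orbit attains the value $c$ exactly once; consequently every $\Cstar$-orbit meets $M$ in a single $S^1$-orbit.

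Finally, under this identification the residual $T_{\mathrm{w}}$-action on $\mathrm{w}\Sigma$ corresponds to the induced action of the compact subtorus $S_{T_{\mathrm{w}}} \subset T_{\mathrm{w}}$ on $M/S^1$. Since $S_{T_{\mathrm{w}}} \into T_{\mathrm{w}}$ is a maximal compact subgroup, the inclusion of classifying spaces $B S_{T_{\mathrm{w}}} \into B T_{\mathrm{w}}$ is a homotopy equivalence, whence $H^*_{T_{\mathrm{w}}}(X) \iso H^*_{S_{T_{\mathrm{w}}}}(X)$ for every $T_{\mathrm{w}}$-space $X$. Applying this to $X = \mathrm{w}\Sigma \iso M/S^1$ yields the asserted isomorphism $H^*_{T_{\mathrm{w}}}(\mathrm{w}\Sigma) \iso H^*_{S_{T_{\mathrm{w}}}}(M/S^1)$.

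I expect the main obstacle to be the orbifold Kempf--Ness step: verifying that $\Cstar$ acts with finite stabilizers on the entire punctured cone, so that the GIT quotient is genuinely an orbifold, that $M$ is a smooth compact submanifold on which $S^1$ has only finite stabilizers, and that the reduced symplectic form on $M/S^1$ matches the orbifold K\"ahler structure of $\mathrm{w}\Sigma$. Carefully accounting for these finite stabilizers is precisely what forces the use of rational coefficients throughout and is what distinguishes the argument from the free (manifold) case.
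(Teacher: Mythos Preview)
Your proposal is correct and follows essentially the same route as the paper: construct $M$ as the level set $\mu^{-1}(\lambda)$ of the weighted norm-square moment map on $\mathrm{a}\Sigma^\times$, identify $M/S^1$ with $\mathrm{w}\Sigma$, and then pass from the complex torus $T_{\mathrm{w}}$ to its compact form $S_{T_{\mathrm{w}}}$. The only cosmetic difference is that where you invoke Kempf--Ness and radial monotonicity of $\mu$, the paper simply writes down the explicit retraction $x \mapsto x\sqrt{\lambda/\mu(x)}$ and checks by hand that it furnishes both the $S_K$-equivariant deformation retract of $\mathrm{a}\Sigma^\times$ onto $M$ and the inverse to $\overline{\iota}\colon M/S^1 \to \mathrm{w}\Sigma$; your abstract argument and their explicit formula are the same computation.
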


In \S \ref{S-GKM} we define Schubert classes both in the weighted flag
orbifold ${\mathrm{w}}\Sigma$ and the punctured affine cone ${\mathrm{a}}%
\Sigma^\times$, using the structure of ${\mathrm{a}}\Sigma^\times$ as a
quasi-projective variety. These classes are defined as pullbacks of
torus-equivariant Schubert classes in the flag variety $\Sigma$. We also
give the combinatorial description, commonly known as GKM description of
equivariant cohomology rings $H^*_K({\mathrm{a}}\Sigma^\times)$ and $H^*_{T_{%
\mathrm{w}}}({\mathrm{w}}\Sigma)$. This is done by describing the image of
the injection
\begin{equation*}
H^*_{T}(\Sigma)\hookrightarrow \bigoplus_{\sigma \in W^P}H^*_T([e_\sigma]),
\end{equation*}
where $e_\sigma$ is a torus-fixed point in \(\Sigma\) and $W^P$ is defined
explicitly in Section \ref{sec:CellDec}.

In \S \ref{S-Pieri} using these GKM descriptions we give Chevalley's formula (Theorem \ref{Th:Chev})
for the weighted flag orbifold ${\mathrm{w}}\Sigma$ by generalizing the
classical formula of Kostant and Kumar \cite{kostant1986nil}. We define
weighted Schubert polynomials ${\mathrm{w}} \mathfrak{S}_{\sigma}(x)$ for a
weighted flag orbifold ${\mathrm{w}}\Sigma$, which generalize the known
double Schubert polynomials defined by Lascoux and Schutzenberger.
The weighted Schubert polynomials correspond to weighted equivariant
cohomology classes of the WFVs. We also give the weighted Chevalley--Monk's
formula (Theorem \ref{Th:Chev-Mon}) in terms of weighted Schubert polynomials.


\section{Irreducible representations of $\mathrm{GL}(n,{\mathbb{C}})$}

\label{S-Rep} 

\subsection{Background}

Let $G=\mathrm{GL}(n,{\mathbb{C}})$, $B$ be the Borel group of upper
triangular matrices, $P$ be a parabolic subgroup of $G$, and $T$ be the
maximal torus of diagonal matrices with associated Lie algebras $\mathfrak{t}%
\subset \mathfrak{b}\subset \mathfrak{p}\subset \mathfrak{gl}_{n}({\mathbb{C}%
})$. Let
\begin{equation*}
\Xi (T)={\mathrm{Hom}}(T,{\mathbb{C}}^{\times })=\langle L_{1},L_{2},\dotsc
,L_{n}\rangle
\end{equation*}%
be the weight lattice. For each $k=1,\dotsc ,n-1$, the $G$-representation $%
\wedge ^{k}{\mathbb{C}}^{n}$ is called fundamental representation of $G$,
having the highest weight
\begin{equation*}
\omega _{k}=\sum_{i=1}^{k}L_{i}\in \Xi (T),\;\;\;\;1\leq k\leq n-1.
\end{equation*}%
Let $\Delta $ be the root system of the Lie algebra $\mathfrak{g}$
corresponding to $G$. Let
\begin{equation*}
\Delta _{s}:=\{\alpha _{i}=L_{i}-L_{i+1},\;1\leq i\leq n-1\}
\end{equation*}%
be the set of simple roots of $\mathfrak{g}$. The Weyl group $W$ of $\Delta $
is generated by reflections in the hyperplane perpendicular to each simple
root $\alpha _{i}$, i.e.,
\begin{equation*}
W=\langle s_{\alpha _{i}}:\alpha _{i}\in \Delta _{s}\rangle \cong S_{n},
\end{equation*}%
where isomorphism with $S_{n}$ is obtained by mapping each simple root $%
\alpha _{i}$ to the simple transposition $s_{i}$. There is a one-one
correspondence between parabolic subgroups and subsets of $\Delta _{s}$ (see
\cite[Section 23.3]{fulton2013representation}). Consider the flag variety
defined by $P$,
\begin{equation*}
\Sigma =G/P:=\{F_{\bullet }:0=V_{0}\subset V_{1}\subset \dotsb \subset
V_{r}\subset V_{r+1}={\mathbb{C}}^{n}|\dim (V_{i})=d_{i}\},
\end{equation*}%
where $P$ is the stabilizer of the $G$-action on each flag in $\Sigma $. The
parabolic subgroup $P$ corresponds to the subset $J=\{\alpha _{i}:i\notin
\{d_{1},\dotsc ,d_{r}\}\}$ of $\Delta _{s}$ and $W_{P}=<s_{i}:\alpha _{i}\in
J>$ denotes the corresponding subgroup of the Weyl group $W$. The dimension
of $\Sigma $ by ~\cite{brion2005lectures} equals
\begin{equation*}
\dim (\Sigma )=\sum_{i=1}^{r}d_{i}(d_{i+1}-d_{i}),\text{ where }d_{r+1}=n.
\end{equation*}%
The flag variety $\Sigma $ is a projective subvariety of ${\mathbb{P}}%
V_{\chi }$, where $V_{\chi }$ is an irreducible $G$-representation with
highest weight
\begin{equation}
\chi =\sum_{i=1}^{r}\omega _{d_{i}}=r\left( L_{1}+\cdots +L_{d_{1}}\right)
+(r-1)\left( L_{d_{1}+1}+\cdots +L_{d_{2}}\right) +\cdots +\left(
L_{d_{r-1}+1}+\cdots +L_{d_{r}}\right) .  \label{chi}
\end{equation}%
Following \cite{fulton1997young}, the Young diagram associated to $\chi $ is
of length $r$ and has type
\begin{equation}
\chi =(\underset{d_{1}}{\underbrace{r,\dotsc ,r}},\underset{d_{2}-d_{1}}{%
\underbrace{r-1,\dotsc ,r-1}},\dotsc ,\underset{d_{r}-d_{r-1}}{\underbrace{%
1,\dotsc ,1}}).  \label{YoungDiagram}
\end{equation}%
That is the first $d_{1}$ rows has $r$-boxes, the next $d_{2}-d_{1}$ rows
has $r-1$ boxes and continuing similarly we get only one box in the last $%
d_{r}-d_{r-1}$ rows.

Let $e_{1} , e_{2} , \ldots, e_{n} $ be the standard basis of ${\mathbb{C}}%
^{n}$, then
\begin{equation*}
v _ { \chi } = \left(e_{1}\wedge \ldots\wedge e_{d_1}\right) \otimes \left(
e_{1}\wedge \ldots \wedge e_{d_2} \right) \otimes \cdots \otimes ( e_{1}
\wedge \ldots \wedge e_{d_r} )
\end{equation*}
in $\wedge^{d_1}({\mathbb{C}} ^ { n }) \otimes \wedge^{d_2} \left( {\mathbb{C%
}} ^{n}\right) \otimes \dotsb \otimes \wedge^{d_r}\left( {\mathbb{C}} ^ { n
} \right) $, is the highest weight vector of
\begin{equation*}
V_ { \chi } = \langle G \cdot v_{ \chi } \rangle
\end{equation*}
by (cf. \cite[Chapter 15]{fulton2013representation}):  We denote the affine
cone over $\Sigma$ by
\begin{equation*}
{\mathrm{a}}\Sigma : = G \cdot {\mathbb{C}} v_{\chi } \subset V _{ \chi } .
\end{equation*}

\subsection{$T $-invariant Basis of $V _ {\protect\chi } $}

To construct an explicit basis of $V _ { \chi } $, we use the \emph{Deyrut's
Construction} of the Schur module \cite{fulton2013representation} with
highest weight $\chi $ which we describe below.

Let ${\mathbb{C}}[X] : = {\mathbb{C}}[ x_{ij} ] _ { 1 \leq i, j \leq n } $
be the polynomial ring in $n^{2} $ variables. Then ${\mathbb{C}}[X] $ is an $%
\mathrm{SL}(n, {\mathbb{C}}) $-module via the following action
\begin{equation*}
A \cdot f ( X ) = f ( A ^ { t } X )\ \text{ for all }\ f \in {\mathbb{C}}%
[X]\ \text{ and } A \in \mathrm{SL}(n, {\mathbb{C}}).
\end{equation*}
A \textit{semi-standard staircase tableau} of length ${r} $ is a filling of
the Young diagram of type \eqref{YoungDiagram} with entries from $\left \{1,
2, \ldots, n \right \}$ which is weakly increasing across each row and
strictly increasing along each column. We denote by $\mathcal{Y} $ the set
of all semi standard staircase tableaux of the type \eqref{YoungDiagram}.

Given a column vector $c= ( c _{1} , c_{2} , \ldots, c_{l } ) ^ {t} $ where $%
1 \leq c_{1} < c_{2} < \dotsb < c_{l} \leq n $ , let
\begin{equation*}
e_{c} : = \det ( x _ { i , c_{j} } ) _ { 1 \leq i , j \leq l } .
\end{equation*}
Let $Y \in \mathcal{Y} $. Let $c$ denote the entries of an arbitrary column
in $Y$ and define
\begin{equation*}
e_{Y} : = \prod _ { c } e_{c} .
\end{equation*}
Then $\langle e_{Y} | Y \in \mathcal{\ Y } \rangle $ is an irreducible
sub-representation of ${\mathbb{C}} [ X ] $ with highest weight $\chi $ and
highest weight vector $e _ { Y _ { 0 } } $ where $Y _ {0} $ is the tableau
in which the $i $-th row is filled with $i $ only (see in the following
Young diagram). \vspace{1em}

\begin{center}
{\scriptsize {%
\begin{ytableau}
 1 & 1 & \cdots &\cdots & \cdots&1 &1 \\

 \vdots & \vdots & \cdots & \cdots &\cdots&\vdots&\vdots \\
 d_1 & d_1 & \cdots &\cdots & \cdots&\cdots &d_1 \\

d_1+1 & \cdots & \cdots & \cdots &\cdots&d_{1}+1 \\
 \vdots & \vdots &\cdots& \cdots & \cdots& \cdots\\

d_2 & \cdots & \cdots & \cdots &\cdots&d_{2} \\
 \vdots & \vdots &\cdots& \cdots & \cdots \\
 d_{r-1} &d_{r-1}  \\

 d_{r-1}+1 \\
 \vdots \\

 d_r

\end{ytableau}} }

The Young tableau $Y_0$ corresponds to the highest weight
\end{center}

\vspace{0.5cm}

Thus, we define $V_{ \chi } $ to be $\langle e_{ Y} \, | \, Y \in \mathcal{Y
} \rangle $. Each $e_{ Y } $ is a weight vector under the action of $T $,
with weight given by
\begin{equation*}
\prod _ { i \in Y } t _ { i } ,
\end{equation*}
where the product runs over all the entries of $Y $. We thus have a $T $%
-invariant basis of $V _ { \chi } $.

\begin{ej}
There are 8 staircase tableaux of shape $(2,1) $ which are written below
along with the corresponding weights.

\begin{equation*}
\begin{array}{cccccccc}
\young(11,2) \, \, & \, \, \young(11,3) \, \, & \, \, \young(12,2) \, \, &
\, \, \young(12,3) \, \, & \, \, \young(13,2) \, \, & \, \, \young(13,3) \,
\, & \, \, \young(22,3) \, \, & \, \, \young(23,3) \\[1em]
t_{1}^{2} t_{2} & t_{1} ^{2} t_{3} & t_{1} t_{2}^{2} & 1 & 1 & t_{1} t_{3}
^{2} & t_{2} ^ { 2} t_{3} & t_{2} t_{3} ^ { 2 }%
\end{array}
\end{equation*}
The weight vectors are all binomials in $x_{ij} $'s. For instance, the
highest weight vector is $e _ { {\scriptsize {\ \young(11,2) } }} = x_{11} (
x_{11} x_{22} - x_{ 1 2} x_{21} ) $.
\end{ej}

\section{Equivariant Cohomology of Weighted Flag orbifolds}

\label{S-EqCoh} The aim of this section is to recall the definition of the
weighted flag varieties, give their cell decomposition and to compute their
cohomology.

It is well-known that the exponential map for a compact Lie group is always
surjective. Thus a one-parameter subgroup in $G$ that is, $\mathbb{C}%
^{\times}\hookrightarrow G\times \mathbb{C}^{\times}$, can be found inside a
maximal torus in $G$. Since all maximal tori are conjugate, so we consider
the maximal torus to be $T$.

Specifying weights on an affine cone over a given manifold is equivalent to
choosing a one-parameter subgroup in $T$ (see \cite{C&RweightedGrass} for a
reference). Let $\rho\in {\mathrm{Hom}}{(\mathbb{C}^{\times},T)}$ be
one-parameter subgroup defined by
\begin{equation*}
\rho ( t ) = \mathrm{diag} ( t ^ { w_{1} } , t ^ { w_{2} } , \ldots , t ^{
w_{ n } } ) \in G ,
\end{equation*}
where $w_1,\ldots, w_n\in {\mathbb{Z}}$. Take $u\in {\mathbb{Z}}_{\geq 0}$,
we use $\rho$ and $u$ to make $V_{\chi}$ into a $G \times {\mathbb{C}} ^ {
\times } $-representation subject to the following action (of the second
component of $G\times \mathbb{C}^{\times}$):
\begin{equation*}
t \cdot v = t ^{ u } \rho ( t) v \, \, \forall \, v \in V_{\chi} , t \in {%
\mathbb{C}} ^ { \times } .
\end{equation*}
We assume that this action has only positive weights, which can be done by
taking $u $ to be sufficiently large \cite{C&RweightedGrass}.

\begin{defi}
\cite{C&RweightedGrass} The \textit{weighted flag variety} associated to the
data $(\rho,u), $ is defined to be
\begin{equation*}
{\mathrm{w}}\Sigma : = {\mathrm{a}}\Sigma\backslash\{0\} /{\mathbb{C}}^ {
\times } ={\mathrm{a}}\Sigma^\times /{\mathbb{C}}^ { \times }\subset {%
\mathrm{w}}{\mathbb{P}} V_\chi (\rho,u)
\end{equation*}
where the quotient is taken by action of ${\mathbb{C}}^{\times}$ on $V_{\chi
} $, as defined above.
\end{defi}

The weighted variety ${\mathrm{w}}\Sigma$ usually contains the quotient
singularities due to the weights of their embeddings in ${\mathrm{w}}{%
\mathbb{P}} V_\chi (\rho,u)$ so we can also refer to them as weighted flag
orbifolds. By definition, the obrifold ${\mathrm{w}}\Sigma $ has a natural
residual action of
\begin{equation*}
T_{{\mathrm{w}}} : = ( T \times {\mathbb{C}} ^ { \times } ) / {\mathbb{C}} ^
{ \times } \cong T
\end{equation*}
such that the action of $T_{{\mathrm{w}}} $ extends to an action on the
ambient space ${\mathrm{w}}{\mathbb{P}} V_\chi (\rho,u)$.

The ${\mathbb{C}} ^ { \times } $-action on $V _ { \chi } $ via $\rho $ has
each $e_{Y} $ as a weight vector with the weight $t ^ { w _ { Y } } $ for $t
\in {\mathbb{C}} ^ { \times }, $  where
\begin{equation}  \label{totalweight}
w_{ Y } = \sum _ { i \in Y } w_{ i} + u .
\end{equation}
For the quotient of ${\mathrm{a}}\Sigma ^ { \times } $ by ${\mathbb{C}} ^ {
\times } $ to exist, we have to assume that $w _ { Y } > 0 $ for all $Y \in
\mathcal{\ Y } $, which, as remarked earlier, can be done by taking $u $ to
be sufficiently large. The ambient weighted projective space of ${\mathrm{w}}%
\Sigma $ is thus
\begin{equation*}
{\mathbb{P}} ( w _ { Y } ~ | ~ {\ Y \in \mathcal{\ Y } } ) .
\end{equation*}

\subsection{Open Charts and Cell Decomposition of ${\mathrm{w}}\Sigma$}

\label{sec:CellDec}

We briefly recall the Bruhat order on the Weyl group $S_{n}$. Let $\sigma
\in S_{n } $. We define the \emph{inversions} of $\sigma $ to be
\begin{equation*}
\text{ Inv} ( \sigma ) : = \left \{ (i, j ) | i < j , \sigma ( i ) > \sigma
( j ) \right \} .
\end{equation*}
The \emph{length} of $\sigma$ is defined by $l(\sigma):=|\text{Inv}(\sigma)|$%
. Then, the permutation $\sigma_{0 } = ( n , n-1, \ldots, 1 ) $(written in
the one-line notation) has the highest length $\binom{n}{2} $. We say that $%
\sigma $ is \emph{covered} by $\tau \in S _ { n } $ and, denote by $\sigma
\to \tau $, if there is a transposition $(ij) $ such that $(ij)\cdot\sigma =
\tau $ and $l ( \tau ) = l ( \sigma ) + 1 $. Note that $(ij)$ acts on $\sigma
$ by switching indices $i$ and $j$ wherever they appear in $\sigma$.
We say that $\sigma \prec \tau $, if there is a sequence $\sigma \to \sigma
^ { ( 0 ) } \to \sigma ^ { (1) } \to \dotsb \to \sigma ^ { (s) } \to \tau $.
The reflexive closure $\preceq $ of the relation induced by $\prec $ is a
partial order on $S_{n} $ known as the \emph{Bruhat order}. The permutation $%
\sigma _{0} $ is then the unique maximal element under this order, and the
identity permutation $\mathrm{id} $ is the unique minimum.

Let $W^{\prime }:=W/W_P$ be the quotient of the Weyl group $W$. The quotient
$W^{\prime }$ corresponding to the highest weight \eqref{chi} is given by
\begin{equation*}
W^{\prime }=S_n/(S_{d_1}\times S_{d_2-d_1}\times \cdots\times
S_{d_r-d_{r-1}}).
\end{equation*}
Each coset $[\sigma] \in W^{\prime }$ has a unique representative of minimal
length and the length of this representative is same in $W$ and $W^{\prime }$%
, i.e. $l([\sigma])=l(\sigma)$. We denote by ${\mathrm{Inv}}_P(\sigma)$ all
inversions of $\sigma$ modulo $W_P.$ Thus, the Bruhat order on $W$ induces
the Bruhat order on $W^{\prime }$. The dimension of ${\mathrm{w}}\Sigma$ is
equal to the length of longest element $\sigma_0^{\prime }$ in $W^{\prime }$%
. The set of minimal length representatives of all cosets in $W^{\prime }$
will be denoted by $W^P.$

Recall the Bruhat decomposition of $G$ is given by
\begin{equation*}
G=\mathop{\coprod}\limits_{\sigma \in W}B\sigma B.
\end{equation*}
This induces the cell decomposition of $\Sigma$ into Schubert cells $%
\Omega_\sigma$ parameterized by $W^P$, i.e.,
\begin{equation*}
\Sigma=\mathop{\coprod}\limits_{\sigma \in W^P}B\sigma P/P.
\end{equation*}
Let $e_{\sigma}$ denote the flag in $\Sigma$ corresponding to $\sigma\in W^P$
in the given basis $e_1,\ldots, e_n$. Explicitly,
\begin{equation*}
e_{\sigma}=\mathop{\otimes}\limits_{i=1}^r \mathop{\wedge}%
\limits_{j}e_{\sigma(j)},
\end{equation*}
for $j\in I_i=\{\sigma(1),\ldots,\sigma(d_i)\}$. Each $\sigma \in W^P $
corresponds to a unique tableau $Y \in \mathcal{Y} $ as follows: fill the $i$%
th column of $Y$ with entries from $\{\sigma(1),\ldots,\sigma(d_r-i+1)\}$
such that they are in increasing order. For instance, $\sigma=\mathrm{id}$
corresponds to $Y_0$.

\begin{ej}
Consider the complete flag variety ${\mathcal{F}l}(4)=G/B$. Then $%
\sigma=(1,4,2,3)\in S_4$ corresponds to
\begin{equation*}
e_{\sigma}=e_1\otimes e_1\wedge e_4\otimes e_1\wedge e_4 \wedge e_2
\end{equation*}
and the corresponding tableau $Y$ along with its weight is given as follows:%
\newline

\begin{equation*}
\young(111,24,4)
\end{equation*}

\begin{equation*}
t_{1}^{3} t_{2}t_4^2 .
\end{equation*}
\end{ej}

Consider the (generalized) $G $-equivariant Pl\"ucker embedding of flag
variety
\begin{equation*}
\Sigma=G/P\hookrightarrow {\mathbb{P}} ( V _ { \chi } ), \qquad \qquad g
\cdot e_ { \mathrm{id} } \mapsto g \cdot e_{Y_{0} } .
\end{equation*}
For each $\sigma\in W^P$, $e_{\sigma}$ corresponds to $e_{Y}$, thus we
identify them. Let
\begin{equation*}
U_\sigma=\Sigma\cap U _ { Y } ,
\end{equation*}
where $U_{ Y } $ is the open set of all points in ${\mathbb{P}}( V _ { \chi
} ) $ with a non-zero $e _ { Y } $- coordinate. Then, by \cite%
{C&RweightedGrass} each $U _ { \sigma } $ is $T $-equivariantly isomorphic
to ${\mathbb{C}}^{ l(\sigma_0^{\prime }) }$, where $\sigma_0^{\prime }$ is
the maximal element in $W^P$. The union over all $\sigma\in W^P$ covers $%
\Sigma $. Moreover, the $T$-fixed points of $\Sigma$ are also parameterized
by $W^P$ and their number is equal to $|W^P|$ .

We now describe the open charts for ${\mathrm{w}} \Sigma $. Let
\begin{equation*}
K : = T \times {\mathbb{C}} ^ { \times }
\end{equation*}
be the $n + 1 $-torus inside $G \times {\mathbb{C}} ^ { \times } $. Let
\begin{equation*}
\Pi _ { {\mathrm{w}} } : {\mathrm{a}}\Sigma ^ { \times } \to {\mathrm{w}}%
\Sigma, \, \, \, \pi_{ {\mathrm{w}} } : K \to T _ { {\mathrm{w}} }
\end{equation*}
be the quotient maps. For the straight flag manifold $\Sigma$, we denote the
corresponding maps by $\Pi:{\mathrm{a}}\Sigma\rightarrow \Sigma $ and $%
\pi:K\rightarrow T $ respectively. By definition, $\Pi_{{\mathrm{w}}} $
makes ${\mathrm{a}}\Sigma ^ { \times }$ a $\pi_{{\mathrm{w}}} $-equivariant $%
{\mathbb{C}}^{\times} $-principal bundle over ${\mathrm{w}}\Sigma$. Indeed,
a $\pi $-equivariant trivialization is given by
\begin{equation*}
\varphi_{ \sigma } : {\mathrm{a}} U_{\sigma} \to U_{ \sigma} \times {\mathbb{%
C}} ^ { \times } \, \, , x \mapsto ( \Pi ( x ) , x _ { \sigma } ) ,
\end{equation*}
where ${\mathrm{a}} U_{\sigma}:=\Pi^{-1}(U_{\sigma})$ and the $K $-action on
$U _ { \sigma } \times {\mathbb{C}}^ { \times } $ is defined by factoring
through $T $ via $\Pi $ on the first component, and by $\pi_{\sigma } $(the
projection on the $\sigma$-coordinate) on the second. Let
\begin{equation*}
{\mathrm{w}} U _ { \sigma } : = {\mathrm{a}} U _ { \sigma } / {\mathbb{C}}
^{ \times }
\end{equation*}
and consider the map induced by $\varphi_{\sigma}$ between quotients
\begin{equation*}
\varphi_{ \sigma} ^ { {\mathrm{w}} } : {\mathrm{w}} U _ { \sigma} \to ( U _
{ \sigma } \times {\mathbb{C}} ^ { \times } ) / {\mathbb{C}} ^ { \times } .
\end{equation*}
The stabilizer of the ${\mathbb{C}} ^ {\times } $ on the second component of
$U _ { \sigma } \times {\mathbb{C}} ^ { \times } $ is the cyclic group $\mu
_ { \sigma } : = \left \{ t \in {\mathbb{C}} ^ { \times } | t ^ { w _ {
\sigma } } = 1 \right \},$ where $w_{\sigma}=w_Y$ and thus, we obtain the
following isomorphisms
\begin{equation*}
{\mathrm{w}} U _ { \sigma } \cong U _ { \sigma } / \mu _ { \sigma } \cong {%
\mathbb{C}} ^ { l(\sigma_0 ^{\prime }) } / \mu _ { \sigma },
\end{equation*}
that are all $T _ { {\mathrm{w}} } $-equivariant.

In a similar fashion, one can lift the Schubert cells $\Omega_{\sigma }$ via
$\Pi $ to ${\mathrm{a}} \Omega_{\sigma } \cong \Omega _ { \sigma } \times {%
\mathbb{C}} ^ { \times } \subset {\mathrm{a}}\Sigma^\times $, which has same
dimension as ${\Omega}_\sigma$ and its irreducibility follows from that of ${%
\Omega}_\sigma$. Then the $K$-invariant cell decomposition

\begin{equation*}
{\mathrm{a}}\Sigma^\times=\bigsqcup_{\sigma \in W^P} {\mathrm{a}}{\Omega}%
_\sigma
\end{equation*}
is obtained from the Bruhat decomposition of $\Sigma=\bigsqcup_{\sigma \in
W^P}\Omega_{\sigma}.$ Similarly, the weighted flag variety ${\mathrm{w}}%
\Sigma$ has a $T_{{\mathrm{w}}}$-invariant weighted cell decomposition

\begin{equation*}
{\mathrm{w}}\Sigma=\bigsqcup_{\sigma \in W^P}{\mathrm{w}}{\Omega}_\sigma
\text{ \;such that \;} {\mathrm{w}}{\Omega}_\sigma:={\mathrm{a}}{\Omega}%
_\sigma/{\mathbb{C}}^{\times}.
\end{equation*}
Using the chart $\varphi_{\sigma}^{{\mathrm{w}}}$, we have
\begin{equation*}
{\mathrm{w}}{\Omega}_{\sigma}\cong {\Omega}_{\sigma}/\mu_{\sigma}.
\end{equation*}

We will briefly recall the main idea of the Borel-Moore homology from \cite%
{fulton1997young} and \cite{borel1960homology}.

\begin{defi}
If $X$ is embedded in a closed subspace of Euclidean space $\mathbb{R}^n$,
then its rational Borel-Moore homology is given by
\begin{equation*}
H_i^{BM}(X):=H^{n-i}(\mathbb{R}^n, \mathbb{R}^n-X).
\end{equation*}
In particular, for any oriented $n$-dimensional manifold $M$, we have
\begin{equation*}
H_i^{BM}(M)=H^{n-i}(M,M-M)=H^{n-i}(M),
\end{equation*}
by \cite[Appendix B.2-(26)]{fulton1997young}.
\end{defi}

This formula can be extended to the case of any oriented, rationally smooth
variety (see \cite[11.4.3]{cox2011toric}). Given the cell decomposition of ${%
\mathrm{w}}\Sigma$, we obtain the following isomorphism \cite[Ex. 6,
Appendix B.2]{fulton1997young}):
\begin{equation}  \label{BM}
H_i^{BM}({\mathrm{w}}\Sigma)\cong \bigoplus_{\sigma\in W^P}H_i^{BM}({\mathrm{%
w}}{\Omega}_{\sigma}).
\end{equation}
Now,
\begin{equation}  \label{BM2}
\begin{array}{rcl}
H_i^{BM}({\mathrm{w}}{\Omega}_{\sigma}) & = & H^{2(\dim{\mathrm{w}}{\Omega}%
_{\sigma})-i}({\mathrm{w}}{\Omega}_{\sigma}) \\
& = & H^{2(\dim {\mathrm{w}}{\Omega}_{\sigma})-i}({\Omega}%
_{\sigma}/\mu_{\sigma}) \\
& \cong & H^{2(\dim {\mathrm{w}}{\Omega}_{\sigma})-i}({\Omega}%
_{\sigma})^{\mu_{\sigma}} \\
& \cong & H^{2(\dim {\mathrm{w}}{\Omega}_{\sigma})-i}({\Omega}_{\sigma})%
\end{array}%
,
\end{equation}
where the first equality follows from the statements given above, since each
cell is locally the quotient of a Euclidean space modulo a finite group,
hence rationally smooth \cite[11.4.4]{cox2011toric}. The first isomorphism
follows from the classical result (see \cite[Theorem 2.4, Chapter III]%
{bredon1972introduction}). The second isomorphism comes from the fact that
as $\mu_{\sigma}$ acts through the connected group $\mathbb{C}^{\times}$,
hence its induced action on cohomology must be trivial. By Equations (4) and
(5), the odd rational cohomology of ${\mathrm{w}}\Sigma$ vanishes.

\begin{prop}
\label{Cohomology}
\begin{equation*}
H^i({\mathrm{w}}\Sigma)=\left\{%
\begin{array}{cc}
{\mathbb{Q}}^{h} & i=2k \\
0 & \mathrm{otherwise}%
\end{array}%
\right.
\end{equation*}
where $h$ is the number of elements in $W^P$ of length $k$.
\end{prop}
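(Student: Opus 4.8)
The plan is to deduce the statement directly from the Borel--Moore computations \eqref{BM} and \eqref{BM2} already in hand, combined with the contractibility of the Schubert cells and the compactness of ${\mathrm{w}}\Sigma$. First I would evaluate the cohomology group appearing on the right of \eqref{BM2}. By the Bruhat decomposition each Schubert cell $\Omega_{\sigma}=B\sigma P/P$ is isomorphic to the affine space ${\mathbb{C}}^{l(\sigma)}$, hence contractible, so $H^{j}(\Omega_{\sigma})={\mathbb{Q}}$ for $j=0$ and vanishes otherwise. Since $\dim_{{\mathbb{C}}}{\mathrm{w}}\Omega_{\sigma}=l(\sigma)$, substituting into \eqref{BM2} gives
\begin{equation*}
H_i^{BM}({\mathrm{w}}\Omega_{\sigma})\cong H^{2l(\sigma)-i}({\mathbb{C}}^{l(\sigma)})=\begin{cases}{\mathbb{Q}} & i=2l(\sigma),\\ 0 & \text{otherwise}.\end{cases}
\end{equation*}

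Next I would assemble these contributions over all cells. Feeding the above into the additivity isomorphism \eqref{BM} yields
\begin{equation*}
H_i^{BM}({\mathrm{w}}\Sigma)\cong\bigoplus_{\sigma\in W^P}H_i^{BM}({\mathrm{w}}\Omega_{\sigma})\cong{\mathbb{Q}}^{h_i},\qquad h_i:=\#\{\sigma\in W^P:2l(\sigma)=i\}.
\end{equation*}
This already exhibits the vanishing of the odd Borel--Moore homology and shows that $h_{2k}$ is exactly the number of elements of $W^P$ of length $k$.

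The remaining, and most delicate, step is to pass from Borel--Moore homology to ordinary cohomology with the correct indexing. Here I would use that ${\mathrm{w}}\Sigma$ is a closed subvariety of the weighted projective space ${\mathbb{P}}(w_Y\mid Y\in\mathcal{Y})$ and is therefore compact; for a compact space Borel--Moore homology agrees with ordinary singular homology, so $H_i^{BM}({\mathrm{w}}\Sigma)\cong H_i({\mathrm{w}}\Sigma)$. Working over the field ${\mathbb{Q}}$, the universal coefficient theorem gives $\dim_{{\mathbb{Q}}}H^i({\mathrm{w}}\Sigma)=\dim_{{\mathbb{Q}}}H_i({\mathrm{w}}\Sigma)$, and combining the last three displays produces $H^{2k}({\mathrm{w}}\Sigma)\cong{\mathbb{Q}}^{h}$ with $h=\#\{\sigma\in W^P:l(\sigma)=k\}$ and $H^{\mathrm{odd}}({\mathrm{w}}\Sigma)=0$, as claimed.

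I expect the main obstacle to be precisely the bookkeeping in this last transition. One is tempted to invoke Poincar\'e duality in the rationally smooth form $H^i({\mathrm{w}}\Sigma)\cong H^{BM}_{2N-i}({\mathrm{w}}\Sigma)$ (with $N=l(\sigma_0^{\prime})=\dim_{{\mathbb{C}}}{\mathrm{w}}\Sigma$), but this would express $H^{2k}$ through the count $\#\{\sigma:l(\sigma)=N-k\}$ rather than $\#\{\sigma:l(\sigma)=k\}$; the two agree only because the Poincar\'e polynomial of $W^P$ is palindromic. Routing the argument through compactness and universal coefficients avoids relying on this symmetry and yields the stated form directly, so I would take care to justify that the cell filtration is by closed subvarieties (so that \eqref{BM} genuinely splits) and that ${\mathrm{w}}\Sigma$ is compact and rationally smooth.
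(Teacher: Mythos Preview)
Your proof is correct and follows essentially the same approach as the paper, whose proof consists only of the sentence ``The proof follows from \eqref{BM} and \eqref{BM2}.'' You spell out the passage from Borel--Moore homology to ordinary cohomology (via compactness of ${\mathrm{w}}\Sigma$ and universal coefficients) that the paper leaves implicit, and your observation that routing instead through the rational Poincar\'e-duality identity $H_i^{BM}\cong H^{2N-i}$ would match the stated indexing only via the palindromic symmetry of lengths in $W^P$ is a genuine clarification of a point the paper glosses over.
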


\begin{proof}
The proof follows from \eqref{BM} and \eqref{BM2}.
\end{proof}

Thus we have the following theorem:

\begin{teo}
There is an $H^*(BT_{\mathrm{w}})$-module isomorphism
\begin{equation*}
H^ { * } _ {T _ { {\mathrm{w}} } } ( {\mathrm{w}}\Sigma ) \cong H ^ { * } (
B T _ { {\mathrm{w}} } ) \otimes _ { {\mathbb{Q}} } H ^ { * } ( {\mathrm{w}}%
\Sigma ),
\end{equation*}
where $H^*_{T_{{\mathrm{w}}}}(.)$ is the $T_{{\mathrm{w}}}$-equivariant
cohomology and $BT_{{\mathrm{w}}}$ is the classifying space of $T_{{\mathrm{w%
}}}$ (see \cite{borel2016seminar}). In fact, $H^ { * } _ {T _ { {\mathrm{w}}
} } ( {\mathrm{w}}\Sigma )$ is a free module.
\end{teo}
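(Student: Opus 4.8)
The plan is to exhibit $\mathrm{w}\Sigma$ as an equivariantly formal space for the residual $T_{\mathrm{w}}$-action and to read off the module structure from the collapse of the Serre spectral sequence of the Borel fibration. Concretely, I would form the Borel construction $(\mathrm{w}\Sigma)_{T_{\mathrm{w}}} := E T_{\mathrm{w}} \times_{T_{\mathrm{w}}} \mathrm{w}\Sigma$, whose cohomology is by definition $H^*_{T_{\mathrm{w}}}(\mathrm{w}\Sigma)$, and consider the associated fibration
\[
\mathrm{w}\Sigma \hookrightarrow (\mathrm{w}\Sigma)_{T_{\mathrm{w}}} \to BT_{\mathrm{w}}.
\]
Since $T_{\mathrm{w}} \cong T$ is a torus, its classifying space $BT_{\mathrm{w}}$ is simply connected, so the local system on the base is trivial and the rational Leray--Serre spectral sequence reads
\[
E_2^{p,q} = H^p(BT_{\mathrm{w}}) \otimes_{\mathbb{Q}} H^q(\mathrm{w}\Sigma) \Longrightarrow H^{p+q}_{T_{\mathrm{w}}}(\mathrm{w}\Sigma).
\]

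The decisive input is a parity observation. On the base side, $H^*(BT_{\mathrm{w}})$ is the polynomial algebra $\mathbb{Q}[L_1,\dots,L_n]$ with the generators placed in degree $2$, hence concentrated in even degrees. On the fiber side, Proposition~\ref{Cohomology} tells us that $H^q(\mathrm{w}\Sigma)$ vanishes for $q$ odd. Consequently $E_2^{p,q} = 0$ whenever $p$ or $q$ is odd. Now the differential $d_r$ has bidegree $(r,\,1-r)$, so a nonzero source forces $p$ and $q$ even while a nonzero target forces $p+r$ and $q+1-r$ even; the former demands $r$ even and the latter demands $r$ odd, which is impossible. Thus every $d_r$ with $r \ge 2$ vanishes, the spectral sequence degenerates at the $E_2$-page, and $E_\infty = E_2$.

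Degeneration yields an isomorphism of the associated graded of $H^*_{T_{\mathrm{w}}}(\mathrm{w}\Sigma)$, for the spectral-sequence filtration, with $H^*(BT_{\mathrm{w}}) \otimes_{\mathbb{Q}} H^*(\mathrm{w}\Sigma)$, where $H^*(BT_{\mathrm{w}})$ acts through the edge homomorphism $\pi^* : H^*(BT_{\mathrm{w}}) \to H^*_{T_{\mathrm{w}}}(\mathrm{w}\Sigma)$. To upgrade this to an honest $H^*(BT_{\mathrm{w}})$-module isomorphism I would invoke the standard equivariant-formality mechanism: collapse makes the fiber-restriction map $H^*_{T_{\mathrm{w}}}(\mathrm{w}\Sigma) \twoheadrightarrow H^*(\mathrm{w}\Sigma)$ surjective, so I choose homogeneous classes in $H^*_{T_{\mathrm{w}}}(\mathrm{w}\Sigma)$ lifting a $\mathbb{Q}$-basis of $H^*(\mathrm{w}\Sigma)$ and verify, degree by degree along the filtration, that these lifts form an $H^*(BT_{\mathrm{w}})$-basis. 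Because $H^*(\mathrm{w}\Sigma)$ is a finite-dimensional $\mathbb{Q}$-vector space, the resulting module is a finite direct sum of copies of $H^*(BT_{\mathrm{w}})$, hence free, which simultaneously establishes the final sentence of the theorem.

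I expect the only genuinely delicate point to be the passage from the associated graded to the actual module isomorphism together with freeness, namely checking that the chosen lifts really generate freely; the rest is formal. One orbifold-specific caveat worth flagging is that rational coefficients are essential throughout: they are precisely what lets Proposition~\ref{Cohomology} identify the cohomology of each quotient cell $\Omega_\sigma/\mu_\sigma$ with that of $\Omega_\sigma$, and thereby guarantee the vanishing of odd cohomology that drives the parity argument. Over $\mathbb{Z}$ the finite isotropy groups $\mu_\sigma$ would obstruct this vanishing and hence the collapse.
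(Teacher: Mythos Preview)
Your argument is correct and is essentially the same approach as the paper's: the paper simply invokes \cite[Theorem 14.1]{goresky1997equivariant}, noting that $T_{\mathrm{w}}$ is connected and that the odd rational cohomology of $\mathrm{w}\Sigma$ vanishes, while you spell out the parity collapse of the Serre spectral sequence for the Borel fibration that underlies that citation. Your additional care about passing from the associated graded to a genuine free $H^*(BT_{\mathrm{w}})$-module, and your remark that rational coefficients are what make Proposition~\ref{Cohomology} go through, are both accurate and on point.
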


\begin{proof}
The torus $T_{{\mathrm{w}}}$ is connected and the odd cohomology of ${%
\mathrm{w}}\Sigma$ vanishes. The result follows from \cite[Theorem 14.1]%
{goresky1997equivariant}.
\end{proof}


\begin{ej}
Consider the flag variety
\begin{equation*}
\Sigma=GL(4,{\mathbb{C}})/P= \{0\subset V_{d_1}\subset V_{d_2}\subset{%
\mathbb{C}}^4\}
\end{equation*}
where $d_1=1$ and $d_2=3$. Here the Weyl group $S_4$ has the corresponding
Parabolic subgroup $W_P=\langle s_2\rangle .$ The set of minimal length
representatives of cosets in $W^P$ is
\begin{equation*}
W^P=\{\mathrm{id},s_1,s_3,
s_1s_3,s_2s_1,s_2s_3,s_2s_1s_3,s_1s_2s_3,s_3s_2s_1,s_3s_2s_3s_1,s_1s_2s_3s_1,s_3s_1s_2s_3s_1\}.
\end{equation*}
The variety $\Sigma$ is five dimensional, thus by using Proposition \ref%
{Cohomology}, we have
\begin{equation*}
H^0({\mathrm{w}}\Sigma)=H^{10}({\mathrm{w}}\Sigma)={\mathbb{Q}},\; H^2({%
\mathrm{w}}\Sigma)=H^8({\mathrm{w}}\Sigma)={\mathbb{Q}}^2 \text{ and } H^4({%
\mathrm{w}}\Sigma)=H^6({\mathrm{w}}\Sigma)={\mathbb{Q}}^3.
\end{equation*}

\end{ej}

\subsection{Equivariant Cohomology Ring of ${\mathrm{w}}\Sigma $}

Since ${\mathrm{a}}\Sigma ^ { \times } $ is a smooth quasi-projective
variety inside ${\mathbb{C}} ^ { | \mathcal{\ Y } | } $, it has a natural
structure of a symplectic manifold given by $\omega = \sum _ { Y \in
\mathcal{\ Y } } d x _ { Y } \wedge d \overline{ x } _ { Y } $. Following
\cite[Section 9]{Kirwan}, we describe ${\mathrm{w}}\Sigma $ as a quotient of
a compact real symplectic submanifold of ${\mathrm{a}}\Sigma ^ { \times } $
by the hamiltonian action of compact real torus inside ${\mathbb{C}}^{
\times } $.

Let $S ^ { 1 } = \left \{ z \in {\mathbb{C}} ^ { \times } | | z | = 1 \right
\} $, $S _ {T } : = ( S ^ { 1 } ) ^ {n } $, $S _ { K } : = ( S ^ { 1} ) ^ {
n+1} $ be the real tori inside ${\mathbb{C}} ^ { \times } $, $T $ and $K $
respectively. We define the real torus inside $T_{{\mathrm{w}}}$ to be $%
S_{T_{{\mathrm{w}}}}:= S_{K}/ S^1.$

\begin{lema}
There is compact symplectic submanifold $M $ of ${\mathrm{a}}\Sigma ^ {
\times } $ such that ${\mathrm{a}}\Sigma ^ { \times }$ $K $-equivariantly
deformation retracts to $M $ and
\begin{equation*}
M / S ^ { 1 } \cong {\mathrm{w}}\Sigma .
\end{equation*}
\end{lema}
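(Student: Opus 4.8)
The plan is to realize ${\mathrm{w}}\Sigma$ as the symplectic (orbifold) reduction of the ambient space, in the spirit of the GIT-equals-symplectic-reduction principle of \cite[\S9]{Kirwan}, using crucially that the $\mathbb{C}^\times$ we quotient by acts with strictly positive weights $w_Y>0$. Write $V_\chi\cong\mathbb{C}^{|\mathcal{Y}|}$ with the K\"ahler form $\omega=\sum_{Y\in\mathcal{Y}}dx_Y\wedge d\bar x_Y$, so that the circle $S^1\subset\mathbb{C}^\times$ acts by $z\cdot x_Y=z^{w_Y}x_Y$ and is Hamiltonian with moment map
\begin{equation*}
\mu(x)=\tfrac12\sum_{Y\in\mathcal{Y}}w_Y\,|x_Y|^2 .
\end{equation*}
Since every $w_Y>0$, each $\kappa>0$ is a regular value of $\mu$ on $V_\chi\setminus\{0\}$ and $\mu^{-1}(\kappa)$ is compact; I would then set $M:={\mathrm{a}}\Sigma^\times\cap\mu^{-1}(\kappa)$.

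First I would verify that $M$ is a compact smooth real hypersurface. As ${\mathrm{a}}\Sigma$ is a closed conical subvariety and $0\notin\mu^{-1}(\kappa)$, $M$ is closed in the compact set $\mu^{-1}(\kappa)$, hence compact. Since ${\mathrm{a}}\Sigma^\times$ is a smooth quasi-projective variety and the generator $\xi$ of the radial group $\mathbb{R}_{>0}\subset\mathbb{C}^\times$ is tangent to it with $d\mu(\xi)=\sum_Y w_Y^2|x_Y|^2>0$, the value $\kappa$ is regular for $\mu|_{{\mathrm{a}}\Sigma^\times}$, so $M$ is smooth. The restriction $\omega|_M$ is presymplectic with null distribution exactly the tangents to the $S^1$-orbits, so the Marsden--Weinstein reduction $M/S^1$ inherits the reduced symplectic structure that will turn out to be ${\mathrm{w}}\Sigma$ (this is the sense in which $M$ plays the role of a ``symplectic'' slice; note that $\dim_{\mathbb{R}}M$ is odd).

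Next I would produce the retraction by the radial flow. For $x\in{\mathrm{a}}\Sigma^\times$ the function $\lambda\mapsto\mu(\lambda\cdot x)=\tfrac12\sum_Y w_Y\lambda^{2w_Y}|x_Y|^2$ is a strictly increasing homeomorphism of $\mathbb{R}_{>0}$, so there is a unique $\lambda(x)>0$ with $\mu(\lambda(x)\cdot x)=\kappa$, and $r_t(x):=\lambda(x)^t\cdot x$ defines a deformation retraction of ${\mathrm{a}}\Sigma^\times$ onto $M$ (indeed $\lambda(x)=1$ and $r_t(x)=x$ for $x\in M$). Because $S_K=S_T\times S^1$ acts only by phases it preserves $\mu$ and commutes with $\mathbb{R}_{>0}$, whence $\lambda(s\cdot x)=\lambda(x)$ and $r_t(s\cdot x)=s\cdot r_t(x)$; thus the retraction is $S_K$-equivariant. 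Since $K=T\times\mathbb{C}^\times$ deformation retracts onto its maximal compact torus $S_K$, this is precisely the equivariance meant by ``$K$-equivariantly'' for the purpose of computing $K$-equivariant cohomology.

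Finally I would identify $M/S^1$ with ${\mathrm{w}}\Sigma={\mathrm{a}}\Sigma^\times/\mathbb{C}^\times$ by restricting $\Pi_{\mathrm{w}}$ to $M$. It is surjective because $r_1(x)\in M$ lies on the $\mathbb{C}^\times$-orbit of $x$, it factors through $M/S^1$ since $S^1\subset\mathbb{C}^\times$, and it is injective on $M/S^1$: if $y=t\cdot x$ with $x,y\in M$ and $t=\lambda z$ where $\lambda\in\mathbb{R}_{>0}$, $z\in S^1$, then $\kappa=\mu(y)=\mu(\lambda\cdot x)$ forces $\lambda=1$ by strict monotonicity, so $t=z\in S^1$. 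A continuous bijection from the compact $M/S^1$ onto the Hausdorff ${\mathrm{w}}\Sigma$ is a homeomorphism, and it is visibly $S_{T_{\mathrm{w}}}=S_K/S^1$-equivariant. \textbf{The main difficulty} I expect lies in the orbifold bookkeeping of this last step: the $S^1$-action on $M$ has the finite stabilizers $\mu_\sigma$, so $M/S^1$ is an orbifold, and one must check that the bijection above matches these stabilizers with the weighted quotient singularities of ${\mathrm{w}}\Sigma$ and that the equivariance descends correctly to $S_{T_{\mathrm{w}}}$. The positivity $w_Y>0$ is exactly what keeps $\mu^{-1}(\kappa)$ compact and the radial flow well defined throughout.
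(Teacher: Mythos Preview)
Your proposal is correct and follows the same approach as the paper: take $M$ to be a regular level set of the quadratic $S^{1}$-moment map on ${\mathrm{a}}\Sigma^{\times}$, produce an $S_{K}$-equivariant radial retraction onto $M$, and then identify $M/S^{1}$ with ${\mathrm{w}}\Sigma$. The only implementation difference is that the paper retracts via the ordinary cone scaling $x\mapsto\bigl(1+t(\sqrt{\lambda/\mu(x)}-1)\bigr)x$ and writes down an explicit inverse for $\overline{\iota}$, whereas you use the weighted $\mathbb{R}_{>0}$-flow $\lambda(x)^{t}\cdot x$ and the compact-to-Hausdorff bijection argument; your choice has the advantage that the retraction visibly stays inside the weighted $\mathbb{C}^{\times}$-orbit, so the ``orbifold bookkeeping'' you flag as the main difficulty is in fact already handled by your injectivity step and requires no further work for the lemma as stated.
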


\begin{proof}
The $S^ {1} $ action on $V_ {\chi } \cong \mathbb{R} ^ { 2 | \mathcal{Y} | }
$ factors through the ${\mathbb{C}} ^ { \times } $ action defined earlier,
and since ${\mathrm{a}}\Sigma ^ { \times } $ is $S ^{1} $-invariant, the
action restricts to ${\mathrm{a}}\Sigma ^ { \times } $. The associated
vector field for this action is
\begin{equation*}
X = \sum _ { Y \in \mathcal{\ Y } } w _ { Y} ( - x ^ { 2 } _ { Y } \partial
x ^ {1} _ { Y } + x ^ { 1} _ { Y } \partial x ^ { 2 } _ { Y } )
\end{equation*}
where $x ^ { 1 } _ { Y } = \Re ( x_ { Y } ) $, $x ^ { 2 } _ { Y } = \Im ( x
_ { Y } ) $ and $w_Y$ is as defined in \eqref{totalweight}.

We identify $\mathbb{R} $ with both the Lie algebra of $S^{1} $ and its
dual. The duality pairing then becomes the standard inner product on $%
\mathbb{R} $. Let $\mu : V _ { \chi } \to \mathbb{C} $ be defined by
\begin{equation*}
\mu ( x ) = i \sum _{ Y \in \mathcal{Y} } w _ { Y } | x _ { Y } | ^ { 2 } .
\end{equation*}

Below we check that $\mu$ is a moment map with respect to which $X$ is
Hamiltonian.
\begin{align*}
\iota_X (\omega) & = 2i \sum _ { Y \in \mathcal{\ Y } } w_ { Y } ( x ^ { 2 }
_ { Y} d x ^ { 2} _ {Y } + x ^ { 1 } _{ Y } d x ^ { 1 } _{ Y } ) \\
& = 2 i \sum _ { Y \in \mathcal{Y} } \frac{1}{2} w_{Y} d ( ( x _{Y} ^{2} ) ^
{ 2} + ( x _{ Y } ^ {1} ) ^ {2} ) \\
& =i \sum _ { Y \in \mathcal{Y} }d(w_Y |x_Y|^2) \\
& = d \mu .
\end{align*}

Since ${\mathrm{a}}\Sigma^{\times} $ is an $S^{1} $-invariant symplectic
submanifold in $V_{\chi } $, the restriction of $\mu $ on ${\mathrm{a}}%
\Sigma ^ { \times } $ gives us a moment map with respect to which the
induced action on ${\mathrm{a}}\Sigma^{\times} $ is Hamiltonian.

Let $M = \mu ^ { - 1 } ( \lambda ) $ be a level surface for any regular
value $\lambda $ of $\mu $. Then, $M $ is a smooth $S _ { K } $-invariant
submanifold inside ${\mathrm{a}}\Sigma^ { \times } $. The map
\begin{equation*}
f : {\mathrm{a}}\Sigma ^ { \times } \times [0,1 ] \to {\mathrm{a}}\Sigma ^ {
\times }
\end{equation*}
given by
\begin{equation*}
((x_{Y})_{ Y \in \mathcal{\ Y } }, t ) \mapsto \big  (    ( 1 + t ( \sqrt{
\lambda / \mu ( x ) } - 1 \big      )                      ) x _ { Y } ) _ {
Y \in \mathcal{\ Y } }
\end{equation*}
is easily checked to be a deformation retraction from ${\mathrm{a}}\Sigma^ {
\times } $ to $M $. Furthermore, this deformation retraction is equivariant
with respect to the $S _ { K } $ action that is, each $f_{t} $ is a $S_{K } $%
-equivariant map for all $t\in[0,1]$.

Since ${\mathrm{a}}\Sigma ^ { \times } $ is $S_{K} $-invariant, the quotient
${\mathrm{w}}\Sigma ^ { \times} $ is $S_{ T_{{\mathrm{w}}} } $-invariant. As
the inclusion $\iota : M \to {\mathrm{a}}\Sigma ^ { \times } $ is
equivariant with respect to the inclusion $(S^1 )^{n+1} \hookrightarrow K $,
the induced quotient map $\overline { \iota } : M / S ^{1} \to {\mathrm{w}}%
\Sigma $ will be equivariant with respect to the inclusion $S _ { T _ { {%
\mathrm{w}} } } \hookrightarrow T _ { {\mathrm{w}} } $. Furthermore, it is a
homeomorphism as its inverse is given by 
\begin{equation*}
(x_{Y})_{Y \in \mathcal{Y} } \to ( x_{ Y } \sqrt { \lambda / \mu ( x ) } ) _
{ Y \in \mathcal{\ Y } } ,
\end{equation*}
which can easily be seen to be continuous.
\end{proof}

Using the above Lemma we obtain the following isomorphism which gives us the
equivariant cohomology ring of ${\mathrm{w}}\Sigma$.

\begin{teo}
The map $\overline{\iota}^*:H_{T_{{\mathrm{w}}}}^*({\mathrm{w}}\Sigma)\to
H^*_{S_{T_{{\mathrm{w}}}}}(M/S^1) $ is a ring isomorphism.
\end{teo}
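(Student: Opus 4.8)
The plan is to factor the map $\overline{\iota}^*$ as a composite of two ring isomorphisms, one coming from the group inclusion $S_{T_{{\mathrm{w}}}}\hookrightarrow T_{{\mathrm{w}}}$ and one coming from the equivariant homeomorphism $\overline{\iota}$ itself, so that all the geometric content is already supplied by the preceding Lemma and only formal properties of the Borel construction remain.

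First I would upgrade the conclusion of the preceding Lemma. There $\overline{\iota}:M/S^1\to{\mathrm{w}}\Sigma$ was shown to be a homeomorphism, equivariant with respect to the inclusion $S_{T_{{\mathrm{w}}}}\hookrightarrow T_{{\mathrm{w}}}$, with an explicit continuous inverse. Regarding ${\mathrm{w}}\Sigma$ as an $S_{T_{{\mathrm{w}}}}$-space by restricting its $T_{{\mathrm{w}}}$-action, this makes $\overline{\iota}$ an $S_{T_{{\mathrm{w}}}}$-equivariant homeomorphism whose inverse is also $S_{T_{{\mathrm{w}}}}$-equivariant. By functoriality of the Borel construction, the induced pullback
\begin{equation*}
\overline{\iota}^*_{S}:H^*_{S_{T_{{\mathrm{w}}}}}({\mathrm{w}}\Sigma)\longrightarrow H^*_{S_{T_{{\mathrm{w}}}}}(M/S^1)
\end{equation*}
is then a ring isomorphism, its inverse being the pullback along the inverse homeomorphism.

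Next I would compare the $T_{{\mathrm{w}}}$- and $S_{T_{{\mathrm{w}}}}$-equivariant cohomology of ${\mathrm{w}}\Sigma$. Since $S_{T_{{\mathrm{w}}}}\cong(S^1)^n$ is the maximal compact subgroup of $T_{{\mathrm{w}}}\cong({\mathbb{C}}^{\times})^n$, the inclusion is a deformation retract, so the induced map of classifying spaces $BS_{T_{{\mathrm{w}}}}\to BT_{{\mathrm{w}}}$ is a homotopy equivalence. Applying the Borel construction with the identity map on ${\mathrm{w}}\Sigma$, the restriction map
\begin{equation*}
\mathrm{res}:H^*_{T_{{\mathrm{w}}}}({\mathrm{w}}\Sigma)\longrightarrow H^*_{S_{T_{{\mathrm{w}}}}}({\mathrm{w}}\Sigma)
\end{equation*}
is a ring isomorphism. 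Finally, I would invoke Borel functoriality for the equivariant map $\overline{\iota}$ over the group inclusion to identify the map $\overline{\iota}^*$ of the statement with the composite $\overline{\iota}^*_{S}\circ\mathrm{res}$; being a composition of two ring isomorphisms, it is itself a ring isomorphism.

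The main obstacle here is not geometric — the Lemma already produced $M$, the moment map, the retraction, and the equivariant homeomorphism — but rather bookkeeping: I must pin down precisely that the map called $\overline{\iota}^*$, whose source and target carry equivariant cohomology for the two \emph{different} groups $T_{{\mathrm{w}}}$ and $S_{T_{{\mathrm{w}}}}$, is the Borel-construction pullback attached to an equivariant map over a group homomorphism, and that it splits as restriction followed by the equivariant-homeomorphism pullback. Once this factorization is set up, the two standard inputs, namely the maximal-compact-subgroup homotopy equivalence and pullback along an equivariant homeomorphism, close the argument.
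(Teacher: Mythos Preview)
Your proposal is correct and matches the paper's approach: the paper gives no explicit proof for this theorem, merely stating that it follows from the preceding Lemma, and your argument supplies precisely the standard details (restriction along the maximal compact subgroup $S_{T_{\mathrm w}}\hookrightarrow T_{\mathrm w}$ followed by pullback along the $S_{T_{\mathrm w}}$-equivariant homeomorphism $\overline{\iota}$) that the paper leaves implicit.
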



\section{Schubert classes and GKM descriptions}

\label{S-GKM}

\subsection{Weighted Schubert classes}

Let us denote by $\tilde{\xi}_{\sigma}$ the equivariant Schubert classes
forming distinguished $H^*_T(BT)$-module basis for $H_T^*{(\Sigma)}$, see
\cite{borel2016seminar}. The classes $\tilde{\xi} _ { \sigma } $ are
obtained as the \emph{equivariant fundamental classes} of the $T $-invariant
Schubert varieties $X_{\sigma} $ which are closures of the corresponding
Schubert cells $\Omega_{ \sigma } $. We start with the following diagram.

\begin{center}
\begin{tikzcd}         [column sep = large, row sep = large]
   ET  \times _ { T }  \Sigma   \arrow[d] &   E K \times _ { K }  \a\Sigma^{\times} \arrow[l,   swap ,   "\Pi"] \arrow[r, "\Pi_{\w}"] \arrow[d] &   E T _ { \w }  \times _ { T _{ \w } }  \w\Sigma      \arrow[d] \\
BT  &  BK   \arrow    [r, "\pi_{\w}"] \arrow[l , swap   ,     "\pi"]  &  BT_{\w}
\end{tikzcd}
\end{center}

Applying the cohomology functor, we get the following diagram:

\begin{center}
\begin{tikzcd}  [column sep = large, row sep = large]
 H^{*}_{T} (\Si)   \arrow[r,"\Pi^*"]  &   H^{* } _ {K} (     \a\Si^{\times}  )  &     H ^ { * }   _ {T _{ \w} }  (   \w\Si    )           \arrow[l,swap, "\Pi_{\w}^*"] \\

    H^*_T(BT)          \arrow[r,   "\pi^{*}"] \arrow[u]  &            H^*_K(BK)    \arrow[u]   &   H^*_{T_{\w}}(BT_{\w})    \arrow[l     , swap   ,     "\pi_{\w} ^ { *   }    "]                                       \arrow[u]
\end{tikzcd}
\end{center}

We take $\tilde{\xi}_{\sigma}= [\Omega_{\sigma}]_T\in
H_T^{2l(\sigma)}(\Sigma)$, then the Schubert cycles for the punctured affine
cone ${\mathrm{a}}\Sigma^\times$ and the weighted flag orbifold ${\mathrm{w}}%
\Sigma$, can be described via maps $\Pi^*$ and $\Pi_{{\mathrm{w}}}^*$
respectively as follows, by the same argument as in \cite{A&MweightedGrass}
for the case of Grassmannians:
\begin{equation*}
\begin{array}{cc}
{\mathrm{a}}\tilde{\xi}_{\sigma}:=\Pi^*(\tilde{\xi}_{\sigma})\in
H_K^{2l(\sigma)}({\mathrm{a}}\Sigma ^{\times}) & {\mathrm{w}}\tilde{\xi}%
_{\sigma}:=(\Pi_{{\mathrm{w}}}^*)^{-1}({\mathrm{a}}\tilde{\xi}_{\sigma})\in
H_{T_{{\mathrm{w}}}}^{2l(\sigma)}({\mathrm{w}}\Sigma)%
\end{array}%
.
\end{equation*}

\subsection{ GKM description of $H^*_{T_{\mathrm{w}}}({\mathrm{w}}\Sigma)$}

Let the $T$-equivariant cohomology $H ^*( BT ) $ of a point be denoted by ${%
\mathbb{Q}}[T^*]\cong \mathbb{Q}[y_1,\ldots,y_n]$. Let
\begin{equation*}
{\mathbb{Q}}[K^*]:=H^*(BK) ={\mathbb{Q}}[y_1,\ldots,y_n,z].
\end{equation*}
Since $K=T\times \mathbb{C}^{\times}$, one can take the basis of ${\mathbb{Z}%
}$-linear functionals $\{y_1,\ldots,y_n,z\}$ on the integral sublattice of $%
\mathrm{Lie}(K)^*$.

Define $\rho^*: \mathrm{Lie}(K)^*\rightarrow \mathrm{Lie}({\mathbb{C}}%
^{\times})^*={\mathbb{Q}}[\delta]$ by
\begin{equation*}
y_i\mapsto w_i\delta \mathrm{\ and\ } z\mapsto -u\delta
\end{equation*}
By definition of $T_{{\mathrm{w}}}$ we have $\ker(\rho^*)=\mathrm{Lie}(T_{{%
\mathrm{w}}})^*$. Let $y_1^{{\mathrm{w}}},\ldots, y_n^{{\mathrm{w}}}$ denote
the $\mathbb{Z}$-basis of $\mathrm{Lie}(T_{{\mathrm{w}}})^*.$ Since $T_{{%
\mathrm{w}}}$ is defined as the quotient of $K$, thus we define $y_i^{%
\mathrm{w}}$ as elements of $\ker(\rho^*)$ by
\begin{equation}  \label{yw}
y_i^{{\mathrm{w}}}:= y_i + \frac{w_i}{u}z, \mbox{ for all } i=1,\ldots,n.
\end{equation}

Let
\begin{equation*}
{\mathbb{Q}}[T_{{\mathrm{w}}}^*]:=H^*(BT_{{\mathrm{w}}})={\mathbb{Q}}[y_1^{{%
\mathrm{w}}},\ldots, y_n^{{\mathrm{w}}}]\subset {\mathbb{Q}}[K^*].
\end{equation*}

For each $\sigma\in W^P,$ we use the following notation in the rest of the
paper,
\begin{equation}\label{4}
\arraycolsep=1pt\def\arraystretch{2}
\begin{array}{l}

y_{\si}:=r \left(y_{\si_{1}}+\cdots + y_{\si_{d_1}}\right) +(r-1)\left(y_{\si_{d_1+1}}+\cdots+y_{\si_{d_2}}\right)+\cdots+\left(y_{\si_{d_{r-1}+1}}+\cdots+ y_{\si_{d_r}}\right),\\

y^{\w}_{\si}:=r \left(y^{\w}_{\si_{1}}+\cdots + y^{\w}_{\si_{d_1}}\right) +(r-1)\left(y^{\w}_{\si_{d_1+1}}+\cdots+y^{\w}_{\si_{d_2}}\right)+\cdots+\left(y^{\w}_{\si_{d_{r-1}+1}}+\cdots+ y^{\w}_{\si_{d_r}}\right),\\

w_{\si}:=r \left(w_{\si_{1}}+\cdots + w_{\si_{d_1}}\right) +(r-1)\left(w_{\si_{d_1+1}}+\cdots+w_{\si_{d_2}}\right)+\cdots+\left(w_{\si_{d_{r-1}+1}}+\cdots+ w_{\si_{d_r}}\right)+u,\\
\end{array}
\end{equation}

where $\sigma_i=\sigma(i)$. The fixed points under the action of $T$ on $%
\Sigma$ are $[e_{\sigma}]$, for $\sigma\in W^P$. We also denote the $T_{{%
\mathrm{w}}}$-fixed points of ${\mathrm{w}}\Sigma$ by the same notation $%
[e_{\sigma}]$. By equivariant formality of $\Sigma$ (see for instance in
\cite{guillemin2006gkm}), the restriction map to the fixed points is
injective,
\begin{equation}  \label{(1)}
H_T^*(\Sigma)\to \mathop{\bigoplus}\limits_ {\sigma\in W^P} {\mathbb{Q}}%
[T^*]; \,\,\,\,\, \alpha\mapsto (\alpha _{\sigma})_{\sigma\in W^P}.
\end{equation}
By \cite{goresky1997equivariant}, the image of the above map is given by
\begin{equation*}
\Big\{ \alpha=(\alpha_{\sigma})_{\sigma\in W^P}\in \mathop{\bigoplus}%
\limits_ {\sigma\in W^P} {\mathbb{Q}}[T^*]\ \ \Big| \ \
\alpha_{\sigma}-\alpha_{\tau}\in \langle y_i-y_j\rangle= \langle
y_{\sigma}-y_{\tau}\rangle;\mbox{ whenever } \sigma=(ij)\tau \Big\}.
\end{equation*}

Since the fixed points of the $T_{{\mathrm{w}}}$-action are the images of $%
[e_{\sigma}]$ in ${\mathrm{w}}\Sigma$ and given the isomorphism $H^*_{T_{{%
\mathrm{w}}}}([e_{\sigma}])\cong {\mathbb{Q}}[T_{{\mathrm{w}}}^*]$, we have
the following restriction map
\begin{equation}  \label{(2)}
H^*_{T_{{\mathrm{w}}}}({\mathrm{w}}\Sigma)\to \mathop{\bigoplus}\limits_
{\sigma\in W^P} {\mathbb{Q}}[T_{{\mathrm{w}}}^*]; \,\,\,\,\, \gamma\mapsto
(\gamma _{\sigma})_{\sigma\in W^P}.
\end{equation}

For any $\sigma\in W^P$, the image of $[e_{\sigma}]$ in ${\mathrm{a}}%
\Sigma^{\times}$ is a $\mathbb{C}^{\times}$-orbit of $e_\sigma$. Denote by $%
K_{\sigma}$, kernel of the map $K\to \mathbb{C}^{\times},$ given by
\begin{equation*}
(t_1,\ldots,t_n,s)\mapsto s^{-1}t^{\sigma}; \,
t^{\sigma}=(t_{\sigma_1}\ldots t_{\sigma_{d_1}})^r(t_{\sigma_{d_1+1}}\ldots
t_{\sigma_{d_2}})^{r-1}\ldots (t_{\sigma_{d_{r-1}+1}}\ldots
t_{\sigma_{d_r}}).
\end{equation*}
By definition, $K_{\sigma}$ is the set of those elements which fix $%
e_{\sigma}\in {\mathrm{w}}\Sigma$. Thus the $K_\sigma$-equivariant
cohomology of the fixed point $[e_\sigma]$ is
\begin{equation*}
H^*_{K_{\sigma}}(e_{\sigma})\cong {\mathbb{Q}}[K_{\sigma}^*],
\end{equation*}
where  ${\mathbb{Q}}[K_{\sigma}^*]= {\mathbb{Q}}[K]/ \langle y_{\sigma}-z
\rangle $. Indeed, this is true because $K_{\sigma}$ is the isotropy
subgroup of $\sigma$ in $K$ and due to the following isomorphism
\begin{equation*}
\mathrm{Lie}(K_{\sigma})^*\cong \mathrm{Lie}(K)^*/\langle y_{\sigma}-z
\rangle .
\end{equation*}
The restriction map for ${\mathrm{a}}\Sigma^\times$ thus reads
\begin{equation}  \label{(3)}
H^*_K({\mathrm{a}}\Sigma^\times)\to \mathop{\bigoplus}\limits_ {\sigma\in
W^P} {\mathbb{Q}}[K_{\sigma}^\star]\cong \mathop{\bigoplus}\limits_
{\sigma\in W^P} {\mathbb{Q}}[K^*]/\langle y_{\sigma}-z \rangle;
\,\,\,\,\,Q\mapsto (Q_{\sigma})_{\sigma\in W^P}
\end{equation}
The map (\ref{(3)}) is injective due to injectivity of (\ref{(1)}) and (\ref%
{(2)}) along with the commutativity of the following diagram

\begin{equation}  \label{CommuDiag}
\xymatrix{ H_{T}^*(\Si) \ar[rr]^{\Pi^{*}}\ar[d] &&
H^*_K(\a\Si^{\times})\ar[d] && \ar[ll]_{\Pi_\w^*} H^*_{T_\w}(\w\Si)\ar[d]\\
\mathop{\bigoplus}\limits_ {\si\in W^P} \QQ[T^*] \ar[rr]^{\kappa_{\si}^{*}}
&& \mathop{\bigoplus}\limits_ {\si\in W^P} \QQ[K_{\si}^*] &&
\ar[ll]_{\kappa_{\si} ^ {\w * } } \mathop{\bigoplus}\limits_ {\si\in W^P}
\QQ[T_{\w}^*]}
\end{equation}
where the maps giving the bottom arrows are induced by $\kappa_{\sigma}:
K_{\sigma}\hookrightarrow K\to T$ and $\kappa^{{\mathrm{w}}}_{\sigma}:
K_{\sigma}\hookrightarrow K\to T_{{\mathrm{w}}} $ defined by sending each
element to its class modulo $\langle y_{\sigma}-z \rangle$.

\begin{teo}
\label{thm4.1} The equivariant cohomology of ${\mathrm{w}}\Sigma$ has the
following GKM description
\begin{equation*}
\Big\{ \gamma=(\gamma_{\sigma})_{\sigma\in W^P}\in \mathop{\bigoplus}%
\limits_ {\sigma\in W^P} {\mathbb{Q}}[T_{{\mathrm{w}}}^*] \Big| %
\gamma_{\sigma}-\gamma_{\tau}\in \langle w_{\tau}y_{\sigma}^{{\mathrm{w}}%
}-w_{\sigma}y_{\tau}^{{\mathrm{w}}} \rangle;\mathrm{\ whenever }\;
\sigma=(ij)\tau \Big\}.
\end{equation*}
\end{teo}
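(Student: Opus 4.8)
The plan is to transport the classical GKM description of $H^*_T(\Si)$ across the two horizontal maps of the commutative diagram \eqref{CommuDiag}, first to the punctured cone $\a\Si^\times$ and then to $\w\Si$, by comparing the three families of fixed-point rings $\QQ[T^*]$, $\QQ[K_{\si}^*]$ and $\QQ[T_{\w}^*]$. First I would record the two facts that make the diagram usable. Since $\a\Si^\times\to\w\Si$ is a $\Cstar$-orbibundle, the Borel construction $EK\times_K\a\Si^\times$ is rationally homotopy equivalent to $ET_{\w}\times_{T_{\w}}\w\Si$, so $\Pi_{\w}^*$ is an isomorphism. Combined with the injectivity of the vertical restriction maps (equivariant formality, via the vanishing of odd cohomology established earlier) and the commutativity of the right square, this gives $\mathrm{Im}\big(H^*_{T_{\w}}(\w\Si)\big)=(\kappa^{\w*})^{-1}\big(\mathrm{Im}\,H^*_K(\a\Si^\times)\big)$, where $\kappa^{\w*}$ is the componentwise bottom map. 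Thus everything reduces to identifying the image of $H^*_K(\a\Si^\times)$ and pulling it back.

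Next I would identify $\mathrm{Im}\,H^*_K(\a\Si^\times)=\kappa^*(\mathrm{GKM}_T)$, where $\mathrm{GKM}_T$ is the classical image of \cite{goresky1997equivariant}. The cell decomposition $\a\Si^\times=\bigsqcup_{\si}\a\Om_{\si}$ makes $H^*_K(\a\Si^\times)$ a free $\QQ[K^*]$-module on the classes $\Pi^*\tilde\xi_{\si}$, so its image is the $\QQ[K^*]$-span of the $\kappa^*(\tilde\xi_{\si}|_{\mathrm{fixed pts}})$. Because each $\kappa_{\si}^*\colon\QQ[T^*]\to\QQ[K_{\si}^*]=\QQ[K^*]/\langle y_{\si}-z\rangle$ is a ring isomorphism (with inverse the evaluation $z\mapsto y_{\si}$), because $\mathrm{GKM}_T$ is generated over $\QQ[T^*]$ by those restrictions, and because $\mathrm{GKM}_T$ is closed under multiplication by the tuple $(y_{\si})_{\si}$ (the image of $z$, which itself lies in $\mathrm{GKM}_T$), this span is exactly $\kappa^*(\mathrm{GKM}_T)$. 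Writing $\epsilon_{\si}\colon\QQ[K^*]\to\QQ[T^*]$ for the evaluation $z\mapsto y_{\si}$, $y_i\mapsto y_i$, it then follows that $\gamma=(\gamma_{\si})$ lies in $\mathrm{Im}\,H^*_{T_{\w}}(\w\Si)$ if and only if the tuple $\al_{\si}:=\epsilon_{\si}(\gamma_{\si})\in\QQ[T^*]$ satisfies $\al_{\si}-\al_{\tau}\in\langle y_{\si}-y_{\tau}\rangle$ for every edge $\si=(ij)\tau$.

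The heart of the argument is to show this is equivalent to $\gamma_{\si}-\gamma_{\tau}\in\langle w_{\tau}y^{\w}_{\si}-w_{\si}y^{\w}_{\tau}\rangle$ in $\QQ[T_{\w}^*]$. Two computations drive this. First, $\epsilon_{\si}$ restricted to $\QQ[T_{\w}^*]$ sends $y_i^{\w}=y_i+\tfrac{w_i}{u}z\mapsto y_i+\tfrac{w_i}{u}y_{\si}$; on degree one this is a rank-one perturbation of the identity whose determinant equals $1+\tfrac1u\sum_i w_i v_i=1+\tfrac1u(w_{\si}-u)=\tfrac{w_{\si}}{u}$, where $v$ is the coefficient vector of $y_{\si}$. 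By the positivity hypothesis $w_{\si}>0$ this is nonzero, so $\epsilon_{\si}\colon\QQ[T_{\w}^*]\xrightarrow{\ \sim\ }\QQ[T^*]$ is a ring isomorphism. Second, substituting $z=y_{\si}$ and $z=y_{\tau}$ into $L^{\w}:=w_{\tau}y^{\w}_{\si}-w_{\si}y^{\w}_{\tau}$ yields $\epsilon_{\si}(L^{\w})=w_{\si}(y_{\si}-y_{\tau})$ and $\epsilon_{\tau}(L^{\w})=w_{\tau}(y_{\si}-y_{\tau})$, both nonzero multiples of $y_{\si}-y_{\tau}$; hence $\epsilon_{\si}$ carries the principal ideal $\langle L^{\w}\rangle$ isomorphically onto $\langle y_{\si}-y_{\tau}\rangle$. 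To conclude, I would split $\al_{\si}-\al_{\tau}=\epsilon_{\si}(\gamma_{\si}-\gamma_{\tau})+\big(\gamma_{\tau}|_{z=y_{\si}}-\gamma_{\tau}|_{z=y_{\tau}}\big)$, the second bracket being divisible by $y_{\si}-y_{\tau}$ since $P(y_{\si})-P(y_{\tau})\in\langle y_{\si}-y_{\tau}\rangle$ for any one-variable polynomial $P$. Therefore $\al_{\si}-\al_{\tau}\in\langle y_{\si}-y_{\tau}\rangle$ is equivalent to $\epsilon_{\si}(\gamma_{\si}-\gamma_{\tau})\in\langle y_{\si}-y_{\tau}\rangle$, which by the isomorphism above is equivalent to $\gamma_{\si}-\gamma_{\tau}\in\langle L^{\w}\rangle$, as claimed.

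The main obstacle I anticipate is the bookkeeping in this last step. The two fixed-point evaluations $\epsilon_{\si}$ and $\epsilon_{\tau}$ are genuinely different ring homomorphisms, sending $z$ to $y_{\si}$ and $y_{\tau}$ respectively, so the weighted edge form must be chosen symmetrically in $\si$ and $\tau$ — this is precisely the role of the weights $w_{\tau},w_{\si}$ in $w_{\tau}y^{\w}_{\si}-w_{\si}y^{\w}_{\tau}$ — and one must separate the ``diagonal'' contribution $\epsilon_{\si}(\gamma_{\si}-\gamma_{\tau})$ from the ``edge'' contribution $\gamma_{\tau}|_{z=y_{\si}}-\gamma_{\tau}|_{z=y_{\tau}}$ before invoking divisibility. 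The positivity hypothesis $w_{\si}>0$ is exactly what guarantees that each $\epsilon_{\si}$ is invertible and that the edge ideals match up under it.
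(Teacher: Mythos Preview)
Your proposal is correct and follows essentially the same route as the paper: transport the classical GKM condition for $\Si$ across the commutative diagram \eqref{CommuDiag} by comparing the edge ideals at each stage. The paper compresses the whole argument to the single quotient isomorphism
\[
\QQ[T_{\w}^*]/\langle w_{\tau}y_{\si}^{\w}-w_{\si}y_{\tau}^{\w}\rangle \;\cong\; \QQ[K^*]/\langle y_{\si}-z,\; y_{\tau}-z\rangle,
\]
deduced from the identity $w_{\tau}y_{\si}^{\w}-w_{\si}y_{\tau}^{\w}\equiv -w_{\si}(y_{\tau}-z)\pmod{y_{\si}-z}$, which is exactly your computation $\epsilon_{\si}(L^{\w})=w_{\si}(y_{\si}-y_{\tau})$ phrased in the quotient. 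Your explicit sections $\epsilon_{\si}$, the determinant calculation $\det=w_{\si}/u$, and the splitting $\al_{\si}-\al_{\tau}=\epsilon_{\si}(\gamma_{\si}-\gamma_{\tau})+(\gamma_{\tau}|_{z=y_{\si}}-\gamma_{\tau}|_{z=y_{\tau}})$ make transparent several steps the paper leaves implicit, but introduce no genuinely new ingredient.
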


\begin{proof}
In order to show the equivalence of GKM conditions we need to check if the
following isomorphism is valid:
\begin{equation*}
{\mathbb{Q}}[T_{{\mathrm{w}}}^*]/\langle w_{\tau}y_{\sigma}^{{\mathrm{w}}%
}-w_{\sigma}y_{\tau}^{{\mathrm{w}}} \rangle\cong {\mathbb{Q}}[K^*]/\langle
y_{\sigma}-z,y_{\tau}-z \rangle.
\end{equation*}
The first of the following isomorphisms is implied by the isomorphism $%
\kappa^{{\mathrm{w}}^*}_{\sigma}$,
\begin{equation*}
{\mathbb{Q}}[T_{{\mathrm{w}}}^*]/\langle w_{\tau}y_{\sigma}^{{\mathrm{w}}%
}-w_{\sigma}y_{\tau}^{{\mathrm{w}}} \rangle\cong {\mathbb{Q}}[K^*]/\langle
y_{\sigma}-z,w_{\tau}y_{\sigma}^{{\mathrm{w}}}-w_{\sigma}y_{\tau}^{{\mathrm{w%
}}} \rangle\cong {\mathbb{Q}}[K^*]/\langle y_{\sigma}-z,y_{\tau}-z \rangle.
\end{equation*}
The second isomorphism follows because
\begin{equation*}
w_{\tau}y_{\sigma}^{{\mathrm{w}}}-w_{\sigma}y_{\tau}^{{\mathrm{w}}%
}=-w_{\sigma}(y_{\tau}-z) \mbox { in } {\mathbb{Q}}[K^*]/\langle
y_{\sigma}-z \rangle .
\end{equation*}
\end{proof}

\begin{teo}
\label{thm4.2} The equivariant cohomology of ${\mathrm{a}}\Sigma^{\times}$
has the following GKM description
\begin{equation*}
\Big\{ Q=(Q_{\sigma})_{\sigma\in W^P}\in \mathop{\bigoplus}\limits_
{\sigma\in W^P} {\mathbb{Q}}[K^*] \Big| Q_{\sigma}=Q_{\tau}\;\;\mathrm{\ in }%
\;\; {\mathbb{Q}}[K^*]/\langle y_{\sigma}-z,y_{\tau}-z \rangle;\;\mathrm{\
whenever }\;\;\sigma=(ij)\tau \Big\}.
\end{equation*}
\end{teo}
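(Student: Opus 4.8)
The plan is to compute the image of the injective restriction map \eqref{(3)}, which I denote $R$, by transporting the classical GKM description of $H^*_T(\Sigma)$ through the pullback $\Pi^*$ along the commutative diagram \eqref{CommuDiag}. Write $\mathcal{G}$ for the classical GKM set $\{(\alpha_\sigma)\mid \alpha_\sigma-\alpha_\tau\in\langle y_\sigma-y_\tau\rangle\ \text{whenever}\ \sigma=(ij)\tau\}$ realizing $H^*_T(\Sigma)$ via \eqref{(1)}, let $\mathcal{G}'$ be the set described in the theorem, and let $\kappa^*$ be the componentwise reduction $\kappa_\sigma^*$ along the bottom of \eqref{CommuDiag}. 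The theorem then reduces to two claims: (i) the algebraic identity $\kappa^*(\mathcal{G})=\mathcal{G}'$, and (ii) the surjectivity of $\Pi^*\colon H^*_T(\Sigma)\to H^*_K({\mathrm{a}}\Sigma^\times)$. Granting these, commutativity of \eqref{CommuDiag} gives $\im R=R(\im\Pi^*)=\kappa^*(\mathcal{G})=\mathcal{G}'$.

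For (i) I would verify both inclusions directly. The inclusion $\kappa^*(\mathcal{G})\subseteq\mathcal{G}'$ is immediate from $y_\sigma-y_\tau=(y_\sigma-z)-(y_\tau-z)\in\langle y_\sigma-z,y_\tau-z\rangle$, so any $z$-free relation $\alpha_\sigma-\alpha_\tau\in\langle y_\sigma-y_\tau\rangle$ descends to the equality $Q_\sigma=Q_\tau$ in ${\mathbb{Q}}[K^*]/\langle y_\sigma-z,y_\tau-z\rangle$. For the reverse, given a tuple with $Q_\sigma=Q_\tau$ in that double quotient, I pass to the canonical $z$-free representatives $q_\sigma\in{\mathbb{Q}}[T^*]$ under the isomorphism ${\mathbb{Q}}[K^*]/\langle y_\sigma-z\rangle\cong{\mathbb{Q}}[T^*]$ obtained by setting $z=y_\sigma$; the ideal identity $\langle y_\sigma-z,y_\tau-z\rangle\cap{\mathbb{Q}}[T^*]=\langle y_\sigma-y_\tau\rangle$ then forces $q_\sigma-q_\tau\in\langle y_\sigma-y_\tau\rangle$, so $(q_\sigma)\in\mathcal{G}$ and $\kappa^*$ sends $(q_\sigma)$ to $(Q_\sigma)$. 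This step is routine ideal bookkeeping.

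The heart of the argument is (ii). I would first show the generator $z\in H^2_K(\mathrm{pt})$ lies in $\im\Pi^*$: the tuple $(y_\sigma)_\sigma$ trivially satisfies the relations defining $\mathcal{G}$, hence equals the image under \eqref{(1)} of some $\beta\in H^2_T(\Sigma)$, and both $z$ and $\Pi^*\beta$ restrict on each $\sigma$-orbit to the class of $y_\sigma$; injectivity of $R$ then gives $z=\Pi^*\beta$. Since $\im\Pi^*$ is a subring containing every $y_i$ (the image of $H^*_T(\mathrm{pt})$) and $z$, it contains ${\mathbb{Q}}[T_{\mathrm{w}}^*]$. Now identify $H^*_K({\mathrm{a}}\Sigma^\times)\cong H^*_{T_{\mathrm{w}}}({\mathrm{w}}\Sigma)$ via $\Pi_{\mathrm{w}}^*$ and invoke the freeness established earlier: this module is free over ${\mathbb{Q}}[T_{\mathrm{w}}^*]$ with basis the weighted Schubert classes, whose images ${\mathrm{a}}\tilde{\xi}_\sigma=\Pi^*\tilde{\xi}_\sigma$ lie in $\im\Pi^*$. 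A ${\mathbb{Q}}[T_{\mathrm{w}}^*]$-submodule containing a full module basis is the whole module, so $\Pi^*$ is surjective.

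The main obstacle is precisely this surjectivity. It is not formal, because $H^*_K({\mathrm{a}}\Sigma^\times)$ carries the extra variable $z$, and one genuinely needs both the freeness of $H^*_{T_{\mathrm{w}}}({\mathrm{w}}\Sigma)$ over ${\mathbb{Q}}[T_{\mathrm{w}}^*]$ and the membership $z\in\im\Pi^*$ to conclude that the Schubert classes generate. A more geometric route to the easy inclusion $\im R\subseteq\mathcal{G}'$ restricts an arbitrary class to the closure of the two-dimensional $K$-orbit joining the $\sigma$- and $\tau$-orbits and reads off compatibility there; but routing everything through the surjectivity of $\Pi^*$ keeps the proof within the framework already developed and yields both inclusions simultaneously.
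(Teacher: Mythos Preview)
Your proof is correct, and your part (i) is exactly what the paper does: it treats Theorem~\ref{thm4.2} as the $\kappa_\sigma^*$-analogue of Theorem~\ref{thm4.1}, the content being the quotient-ring identification ${\mathbb{Q}}[T^*]/\langle y_\sigma-y_\tau\rangle\cong{\mathbb{Q}}[K^*]/\langle y_\sigma-z,y_\tau-z\rangle$, which is your ideal bookkeeping.

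Where you diverge is in your part (ii). The paper does not prove surjectivity of $\Pi^*$ at this point; it simply uses that $\Pi^*$ is an \emph{isomorphism}, a fact invoked freely later (e.g.\ in the proof of Corollary~\ref{homo}) and traceable to the observation that $\Pi:{\mathrm{a}}\Sigma^\times\to\Sigma$ is a $\pi$-equivariant principal ${\mathbb{C}}^\times$-bundle, so $H^*_K({\mathrm{a}}\Sigma^\times)\cong H^*_{K/{\mathbb{C}}^\times}(\Sigma)=H^*_T(\Sigma)$ by the standard quotient principle (this is the ``same argument as in \cite{A&MweightedGrass}''). Your hands-on argument---showing $z\in\im\Pi^*$ via the GKM tuple $(y_\sigma)_\sigma$, then using freeness and the Schubert basis---is valid and more self-contained, but it leans on the fact that the ${\mathrm{a}}\tilde\xi_\sigma$ form a ${\mathbb{Q}}[T_{\mathrm{w}}^*]$-basis, which in the paper's ordering is Proposition~\ref{prop:div}'s precursor and appears only \emph{after} Theorem~\ref{thm4.2}. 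There is no genuine circularity (upper-triangularity of the $\tilde\xi_\sigma|_\tau$ plus the rank count from Theorem~3.3 suffices, and neither uses Theorem~\ref{thm4.2}), but you should flag that reordering. The principal-bundle route is shorter and avoids this bookkeeping entirely.
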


We have proven the equivalence of GKM conditions for $\kappa_{\sigma}^{{%
\mathrm{w}}*}$ above, the equivalence for $\kappa_{\sigma}^*$ follows as a
special case of Theorem \ref{thm4.1}.

Next we express the Schubert classes using the GKM descriptions stated
above. It is known for partial flag manifold $\Sigma$ \cite[Corollary 4.5]{Jul09}, that for
any $\sigma\in W^P,$

\begin{equation}  \label{GKMstraight}
\tilde{\xi}_{\sigma}|_{\tau}=\Bigg\{
\begin{array}{cc}
\mathop{\prod}\limits_{(i,j)\in {\mathrm{Inv}}_P(\sigma)}(y_j-y_i) &
\tau=\sigma \\
0 & \tau \nsucceq \sigma%
\end{array}%
\end{equation}
Using the definitions in (\ref{4}), we write the following equivalent
description
\begin{equation}  \label{GKMstraight2}
\tilde{\xi}_{\sigma}|_{\tau}=\Bigg\{
\begin{array}{cc}
h_{\sigma}\mathop{\prod}\limits_{(i,j)\in {\mathrm{Inv}}_P(\sigma)}(y_{%
\sigma}-y_{(ij)\sigma}) & \tau=\sigma \\
0 & \tau \nsucceq \sigma%
\end{array}%
,
\end{equation}
where $h_{\sigma}=\mathop{\prod}\limits_{(i,j)\in {\mathrm{Inv}}%
_P(\sigma)}(q-p)^{-1}$, where $p$ and $q$ are chosen so that $i\in
[d_p+1,d_{p+1}]$ and $j\in [d_q+1,d_{q+1}]$.\newline
This description along with the commutative diagram \eqref{CommuDiag} and
the formulae for Schubert classes in ${\mathrm{a}}\Sigma^{\times}$ and ${%
\mathrm{w}}\Sigma$ imply the following two propositions.

\begin{prop}
The Schubert classes in ${\mathrm{a}}\Sigma^{\times}$ have the following GKM
description
\begin{equation}  \label{GKMcone}
{\mathrm{a}}\tilde{\xi}_{\sigma}|_{\tau}=\Bigg\{
\begin{array}{cc}
h_{\sigma}\mathop{\prod}\limits_{(i,j)\in {\mathrm{Inv}}_P(%
\sigma)}(z-y_{(ij)\sigma}) & \tau=\sigma \\
0 & \tau \nsucceq \sigma%
\end{array}%
.
\end{equation}
\end{prop}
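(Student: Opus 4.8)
The plan is to deduce the formula directly from the definition ${\mathrm{a}}\tilde{\xi}_{\sigma}=\Pi^*(\tilde{\xi}_{\sigma})$ together with the commutativity of the left-hand square of the diagram \eqref{CommuDiag}, so that everything reduces to the description \eqref{GKMstraight2} of the Schubert classes on $\Sigma$ that has already been established. First I would read off from that square the identity
\[
{\mathrm{a}}\tilde{\xi}_{\sigma}|_{\tau}=\kappa_{\tau}^{*}\bigl(\tilde{\xi}_{\sigma}|_{\tau}\bigr),
\qquad \tau\in W^P,
\]
where $\kappa_{\tau}^{*}\colon{\mathbb{Q}}[T^*]\to{\mathbb{Q}}[K_{\tau}^*]\cong{\mathbb{Q}}[K^*]/\langle y_{\tau}-z\rangle$ is the corresponding bottom arrow, namely the composition of the inclusion ${\mathbb{Q}}[T^*]\hookrightarrow{\mathbb{Q}}[K^*]$ coming from the projection $K\to T$ with the quotient by $\langle y_{\tau}-z\rangle$ coming from $\kappa_{\tau}\colon K_{\tau}\hookrightarrow K$. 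In particular $\kappa_{\tau}^{*}$ sends each $y_i$ to its class in ${\mathbb{Q}}[K^*]/\langle y_{\tau}-z\rangle$ and fixes the scalar $h_{\sigma}$.

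For $\tau\nsucceq\sigma$ the vanishing $\tilde{\xi}_{\sigma}|_{\tau}=0$ from \eqref{GKMstraight2} gives at once ${\mathrm{a}}\tilde{\xi}_{\sigma}|_{\tau}=\kappa_{\tau}^{*}(0)=0$. For the diagonal term $\tau=\sigma$ I would apply $\kappa_{\sigma}^{*}$ to
\[
\tilde{\xi}_{\sigma}|_{\sigma}=h_{\sigma}\prod_{(i,j)\in{\mathrm{Inv}}_P(\sigma)}(y_{\sigma}-y_{(ij)\sigma}).
\]
The decisive point is that in ${\mathbb{Q}}[K^*]/\langle y_{\sigma}-z\rangle$ the linear form $y_{\sigma}$ is identified with $z$: this is exactly the defining relation of ${\mathbb{Q}}[K_{\sigma}^*]$, which in turn reflects the fact that $K_{\sigma}$ is the kernel of the character $(t_1,\dots,t_n,s)\mapsto s^{-1}t^{\sigma}$ whose differential is $y_{\sigma}-z$. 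Hence $\kappa_{\sigma}^{*}(y_{\sigma}-y_{(ij)\sigma})=z-y_{(ij)\sigma}$ for every factor, and multiplying through yields
\[
{\mathrm{a}}\tilde{\xi}_{\sigma}|_{\sigma}=h_{\sigma}\prod_{(i,j)\in{\mathrm{Inv}}_P(\sigma)}(z-y_{(ij)\sigma}),
\]
which is precisely \eqref{GKMcone}.

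The argument is essentially bookkeeping: the real content has already been packaged into \eqref{GKMstraight2} and into the commutativity of \eqref{CommuDiag}, and applying a pullback to those formulae is routine. The single place where care is needed---and the only step where an error could creep in---is the identification $\kappa_{\sigma}^{*}(y_{\sigma})=z$; I would double-check it by tracing through the definition of $K_{\sigma}$ and its character, confirming that the relation cutting out ${\mathbb{Q}}[K_{\sigma}^*]$ is $y_{\sigma}-z$ and not some reindexed variant, and that the weighting coefficients $r,r-1,\dots$ hidden inside $y_{\sigma}$ are consistent between \eqref{4} and the definition of $t^{\sigma}$.
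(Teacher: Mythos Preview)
Your proof is correct and follows essentially the same approach as the paper: the paper simply states that the formula is implied by \eqref{GKMstraight2} together with the commutative diagram \eqref{CommuDiag}, and you have spelled out exactly that deduction, including the key observation that $\kappa_{\sigma}^{*}$ identifies $y_{\sigma}$ with $z$ in ${\mathbb{Q}}[K^*]/\langle y_{\sigma}-z\rangle$.
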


\begin{prop}
The Schubert classes in ${\mathrm{w}}\Sigma$ have the following GKM
description
\begin{equation}  \label{GKMweighted}
{\mathrm{w}}\tilde{\xi}_{\sigma}|_{\tau}=\Bigg\{
\begin{array}{cc}
h_{\sigma}\mathop{\prod}\limits_{(i,j)\in {\mathrm{Inv}}_P(\sigma)}(\frac{%
w_{(ij)\sigma}}{w_{\sigma}}y_{\sigma}^{{\mathrm{w}}}-y^{\mathrm{w}}%
_{(ij)\sigma}) & \tau=\sigma \\
0 & \tau \nsucceq \sigma%
\end{array}%
.
\end{equation}
\end{prop}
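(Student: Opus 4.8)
The plan is to obtain \eqref{GKMweighted} directly from its affine-cone counterpart \eqref{GKMcone} by transporting along the isomorphism $\Pi_{\w}^*$, using the right-hand square of the commutative diagram \eqref{CommuDiag}. By definition ${\w}\tilde{\xi}_{\sigma}=(\Pi_{\w}^*)^{-1}({\a}\tilde{\xi}_{\sigma})$, so $\Pi_{\w}^*({\w}\tilde{\xi}_{\sigma})={\a}\tilde{\xi}_{\sigma}$. Restricting to the $T_{\w}$-fixed point indexed by $\tau$ and reading off the right square of \eqref{CommuDiag}, I get the key relation
\[
\kappa_{\tau}^{\w*}\big({\w}\tilde{\xi}_{\sigma}|_{\tau}\big)={\a}\tilde{\xi}_{\sigma}|_{\tau}\quad\text{in }\ {\mathbb{Q}}[K^*]/\langle y_{\tau}-z\rangle .
\]
Since $\kappa_{\tau}^{\w*}\colon{\mathbb{Q}}[T_{\w}^*]\to{\mathbb{Q}}[K^*]/\langle y_{\tau}-z\rangle$ is the isomorphism already used in the proof of Theorem~\ref{thm4.1}, the restriction ${\w}\tilde{\xi}_{\sigma}|_{\tau}$ is determined as the unique preimage of ${\a}\tilde{\xi}_{\sigma}|_{\tau}$. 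Thus the whole proposition reduces to computing these preimages.

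The vanishing case is immediate: for $\tau\nsucceq\sigma$ the right-hand side is $0$ by \eqref{GKMcone}, and injectivity of $\kappa_{\tau}^{\w*}$ forces ${\w}\tilde{\xi}_{\sigma}|_{\tau}=0$. For the diagonal entry $\tau=\sigma$, since $\kappa_{\sigma}^{\w*}$ is a ring homomorphism it suffices to match the product \eqref{GKMcone} factor by factor, i.e.\ to verify the central identity
\[
\kappa_{\sigma}^{\w*}\!\left(\frac{w_{(ij)\sigma}}{w_{\sigma}}\,y^{\w}_{\sigma}-y^{\w}_{(ij)\sigma}\right)=z-y_{(ij)\sigma}.
\]
I would establish this by first recording, from \eqref{yw} and \eqref{4}, that for any permutation $\rho$ one has $y^{\w}_{\rho}=y_{\rho}+\tfrac{w_{\rho}-u}{u}\,z$ in ${\mathbb{Q}}[K^*]$. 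Reducing modulo $\langle y_{\sigma}-z\rangle$ (that is, setting $z=y_{\sigma}$) then gives $y^{\w}_{\sigma}\equiv\tfrac{w_{\sigma}}{u}\,z$ and $y^{\w}_{(ij)\sigma}\equiv y_{(ij)\sigma}+\tfrac{w_{(ij)\sigma}-u}{u}\,z$; substituting these into the left-hand side, the coefficient $\tfrac{w_{(ij)\sigma}}{w_{\sigma}}$ is exactly what makes the two $z$-contributions cancel down to $z-y_{(ij)\sigma}$. Taking the product over $(i,j)\in\Inv_P(\sigma)$ and multiplying by $h_{\sigma}$ shows that the proposed formula maps under $\kappa_{\sigma}^{\w*}$ to $h_{\sigma}\prod(z-y_{(ij)\sigma})={\a}\tilde{\xi}_{\sigma}|_{\sigma}$, and uniqueness of preimages yields the first case.

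The main obstacle is purely the bookkeeping in the central identity: one must track how each linear form $y^{\w}_{\rho}$ depends on the composite weight $w_{\rho}$ through \eqref{4}, and check that $\tfrac{w_{(ij)\sigma}}{w_{\sigma}}$ is precisely the scalar needed so that the $z$-terms vanish after the substitution $z=y_{\sigma}$. Once that single computation is in hand, everything else is a formal consequence of the diagram \eqref{CommuDiag}, the defining relation ${\w}\tilde{\xi}_{\sigma}=(\Pi_{\w}^*)^{-1}({\a}\tilde{\xi}_{\sigma})$, and the fact that $\kappa_{\sigma}^{\w*}$ is an isomorphism of rings.
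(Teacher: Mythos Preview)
Your proposal is correct and follows exactly the approach indicated in the paper, which derives both propositions from the description \eqref{GKMstraight2} together with the commutative diagram \eqref{CommuDiag} without spelling out the details. Your explicit verification of the central identity $\kappa_{\sigma}^{\w*}\big(\tfrac{w_{(ij)\sigma}}{w_{\sigma}}y_{\sigma}^{\w}-y_{(ij)\sigma}^{\w}\big)=z-y_{(ij)\sigma}$ via the relation $y_{\rho}^{\w}=y_{\rho}+\tfrac{w_{\rho}-u}{u}z$ is precisely the computation the paper leaves to the reader.
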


For simple transpositions $s_d\in W^P$ one has the simple formula,
\begin{equation}  \label{sd}
\tilde{\xi}_{s_d}|_{\tau}=\sum_{j=1}^{d}y_{\tau(j)}-\sum_{j=1}^{d} y_j.
\end{equation}
This can easily be checked for all $\tau\in W^P$. Using the upper
triangularity of weighted Schubert classes, as above, one obtains a basis
for the equivariant cohomology of the weighted flag orbifold ${\mathrm{w}}%
\Sigma$.

\begin{prop}
A $H^*(BT_{{\mathrm{w}}})$-module basis of $H^*_{T_{{\mathrm{w}}}}({\mathrm{w%
}}\Sigma)$ is given by the weighted Schubert classes $\{{\mathrm{w}}\tilde{%
\xi}_{\sigma}\}_{\sigma\in W^P}$.
\end{prop}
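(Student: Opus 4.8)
The plan is to combine the injectivity of the restriction map \eqref{(2)} to the $T_{\mathrm{w}}$-fixed points with the upper-triangularity of the weighted Schubert classes recorded in \eqref{GKMweighted}, and then to upgrade $H^*(BT_{\mathrm{w}})$-linear independence to a basis by a graded Hilbert-series comparison. Throughout I write $R:=H^*(BT_{\mathrm{w}})$, a positively graded polynomial ring with generators in degree $2$, and $M:=H^*_{T_{\mathrm{w}}}({\mathrm{w}}\Sigma)$, which is a finitely generated free graded $R$-module by the module isomorphism $M\cong R\otimes_{\mathbb{Q}}H^*({\mathrm{w}}\Sigma)$ established earlier. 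Each class ${\mathrm{w}}\tilde{\xi}_{\sigma}$ is homogeneous of degree $2l(\sigma)$, and there are exactly $|W^P|$ of them.

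First I would prove $R$-linear independence. Suppose $\sum_{\sigma\in W^P}c_{\sigma}\,{\mathrm{w}}\tilde{\xi}_{\sigma}=0$ with $c_{\sigma}\in R$ not all zero, and let $\tau$ be a minimal element of $\{\sigma: c_{\sigma}\neq 0\}$ for the Bruhat order. Restricting to the fixed point $\tau$ via the injective map \eqref{(2)} gives $\sum_{\sigma}c_{\sigma}({\mathrm{w}}\tilde{\xi}_{\sigma}|_{\tau})=0$. By \eqref{GKMweighted} we have ${\mathrm{w}}\tilde{\xi}_{\sigma}|_{\tau}=0$ unless $\tau\succeq\sigma$, so by minimality of $\tau$ the only surviving term is $c_{\tau}({\mathrm{w}}\tilde{\xi}_{\tau}|_{\tau})$. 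Since ${\mathrm{w}}\tilde{\xi}_{\tau}|_{\tau}=h_{\tau}\prod_{(i,j)\in{\mathrm{Inv}}_P(\tau)}(\tfrac{w_{(ij)\tau}}{w_{\tau}}y^{\mathrm{w}}_{\tau}-y^{\mathrm{w}}_{(ij)\tau})$ is a nonzero element of the integral domain $R$, we conclude $c_{\tau}=0$, a contradiction. Hence the classes are $R$-linearly independent, and the map $\bigoplus_{\sigma}R(-2l(\sigma))\to M$ sending the $\sigma$-th generator to ${\mathrm{w}}\tilde{\xi}_{\sigma}$ is an injection of graded $R$-modules onto the submodule $M':=\sum_{\sigma}R\cdot{\mathrm{w}}\tilde{\xi}_{\sigma}$.

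The remaining, and genuinely more delicate, point is spanning: because the diagonal restrictions ${\mathrm{w}}\tilde{\xi}_{\sigma}|_{\sigma}$ are non-units in $R$, one cannot simply invert the triangular restriction matrix to solve for coefficients. Instead I would argue by Hilbert series. The previous paragraph shows $M'$ is free with $\mathrm{Hilb}_R(M')=\mathrm{Hilb}(R)\cdot\sum_{\sigma\in W^P}t^{2l(\sigma)}$. On the other hand, the module isomorphism gives $\mathrm{Hilb}_R(M)=\mathrm{Hilb}(R)\cdot\sum_{k}\dim_{\mathbb{Q}}H^{2k}({\mathrm{w}}\Sigma)\,t^{2k}$, and by Proposition \ref{Cohomology} the factor $\sum_k\dim_{\mathbb{Q}}H^{2k}({\mathrm{w}}\Sigma)t^{2k}$ equals $\sum_{\sigma\in W^P}t^{2l(\sigma)}$. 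Thus $M'\subseteq M$ are graded $R$-modules with identical Hilbert series; since each graded piece is a finite-dimensional $\mathbb{Q}$-vector space with $M'_d\subseteq M_d$ and $\dim M'_d=\dim M_d$, we get $M'_d=M_d$ for all $d$, whence $M'=M$. Equivalently, this is the graded Nakayama lemma: the images of the ${\mathrm{w}}\tilde{\xi}_{\sigma}$ in $M/R_{+}M\cong H^*({\mathrm{w}}\Sigma)$ span by the dimension count, so the classes generate $M$, and being linearly independent they form a free basis. The main obstacle is exactly this spanning step, and the Hilbert-series comparison is what bypasses the failure of the diagonal entries to be invertible.
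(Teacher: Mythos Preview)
Your proof is correct and follows the same upper-triangularity strategy the paper indicates; the paper itself gives no detailed argument beyond the sentence ``Using the upper triangularity of weighted Schubert classes, as above, one obtains a basis\ldots''. Your write-up is in fact more complete: you make explicit that triangularity with nonzero (but non-invertible) diagonal entries yields only $H^*(BT_{\mathrm w})$-linear independence, and you supply the Hilbert-series comparison against $H^*(BT_{\mathrm w})\otimes_{\mathbb Q}H^*({\mathrm w}\Sigma)$ via Proposition~\ref{Cohomology} to obtain spanning---a step the paper leaves implicit.
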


\section{ Chevalley's formula for ${\mathrm{w}} \Sigma$}

\label{S-Pieri}

Chevalley's formula computes the product of any Schubert cycle with a
divisor class corresponding to a simple root. We compute the weighted
version of the Chevalley's formula.  We also give the
polynomial description for the equivariant cohomology ring of ${\mathrm{w}}%
\Sigma$ and polynomial representative for a weighted Schubert class,
inspired by \cite{abe2015schur}.

\subsection{Weighted Chevalley's Formula}

Using \eqref{yw} and the Schubert class \eqref{sd} for simple transposition $s_d \in W^P$, we have
\begin{equation}  \label{asd}
{\mathrm{a}}\tilde{\xi}_{s_d}|_{\tau}=\sum_{j=1}^{d}\big(y^{{\mathrm{w}}%
}_{\tau(j)}- y^{{\mathrm{w}}}_j-\frac{w_{\tau(j)}-w_j}{u}z\big).
\end{equation}


 We  give an equivariant
Chevalley's formula for the weighted flag orbifold as follows.

\begin{teo}
\label{Th:Chev}
For any simple reflection $s_d\in W^P$ and $\sigma\in W^P$ the weighted
Chevalley's formula is given by
\begin{equation}  \label{Pieri}
{\mathrm{w}}\tilde{\xi}_{s_d}{\mathrm{w}}\tilde{\xi}_{\sigma}= \sum_{j=1}^{d}%
\big(y^{{\mathrm{w}}}_{\sigma(j)}- y^{{\mathrm{w}}}_j-\frac{w_{\sigma(j)}-w_j%
}{u}z\big){\mathrm{w}}\tilde{\xi}_{\sigma}+\mathop{\sum_{i\leq d< j}}%
\limits_{l((ij)\sigma)=l(\sigma)+1} {\mathrm{w}}\tilde{\xi}_{(ij)\sigma}.
\end{equation}
\end{teo}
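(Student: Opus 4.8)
The plan is to deduce the weighted formula from the classical equivariant Chevalley--Monk formula on the straight flag variety $\Sigma$ by transporting it along the two ring maps $\Pi^{*}$ and $\Pi_{\mathrm{w}}^{*}$ appearing in the commuting square \eqref{CommuDiag}. Recall from \cite{kostant1986nil} (see also \cite{Jul09}) that in $H^{*}_{T}(\Sigma)$ one has, for $s_d\in W^P$ and $\sigma\in W^P$,
\[
\tilde{\xi}_{s_d}\,\tilde{\xi}_{\sigma}=\Big(\sum_{j=1}^{d}(y_{\sigma(j)}-y_j)\Big)\tilde{\xi}_{\sigma}+\mathop{\sum_{i\leq d<j}}\limits_{l((ij)\sigma)=l(\sigma)+1}\tilde{\xi}_{(ij)\sigma},
\]
where the diagonal coefficient is exactly $\tilde{\xi}_{s_d}|_{\sigma}$ by \eqref{sd}. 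The two structural inputs are: (i) $\Pi^{*}\colon H^{*}_{T}(\Sigma)\to H^{*}_{K}(\mathrm{a}\Sigma^{\times})$ is a ring homomorphism, linear over $\pi^{*}\colon\mathbb{Q}[T^{*}]\to\mathbb{Q}[K^{*}]$ with $\pi^{*}(y_i)=y_i$, under which $\tilde{\xi}_{\bullet}\mapsto\mathrm{a}\tilde{\xi}_{\bullet}$ by definition; and (ii) $\Pi_{\mathrm{w}}^{*}\colon H^{*}_{T_{\mathrm{w}}}(\mathrm{w}\Sigma)\to H^{*}_{K}(\mathrm{a}\Sigma^{\times})$ is a ring isomorphism, since $\mathrm{a}\Sigma^{\times}\to\mathrm{w}\Sigma$ is a free (orbifold) $\mathbb{C}^{\times}$-quotient and we use rational coefficients, so that $\mathrm{a}\tilde{\xi}_{\bullet}\mapsto\mathrm{w}\tilde{\xi}_{\bullet}$ under $(\Pi_{\mathrm{w}}^{*})^{-1}$.

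First I would apply $\Pi^{*}$ to the classical identity. Because $\Pi^{*}$ is multiplicative and $\pi^{*}$-linear with $\pi^{*}(y_i)=y_i$, the scalar $\sum_{j=1}^{d}(y_{\sigma(j)}-y_j)$ is preserved and each $\tilde{\xi}_{\bullet}$ is sent to $\mathrm{a}\tilde{\xi}_{\bullet}$, giving in $H^{*}_{K}(\mathrm{a}\Sigma^{\times})$ the identity
\[
\mathrm{a}\tilde{\xi}_{s_d}\,\mathrm{a}\tilde{\xi}_{\sigma}=\Big(\sum_{j=1}^{d}(y_{\sigma(j)}-y_j)\Big)\mathrm{a}\tilde{\xi}_{\sigma}+\mathop{\sum_{i\leq d<j}}\limits_{l((ij)\sigma)=l(\sigma)+1}\mathrm{a}\tilde{\xi}_{(ij)\sigma}.
\]
Substituting $y_i=y^{\mathrm{w}}_i-\tfrac{w_i}{u}z$ from \eqref{yw} rewrites the coefficient as $\sum_{j=1}^{d}\big(y^{\mathrm{w}}_{\sigma(j)}-y^{\mathrm{w}}_j-\tfrac{w_{\sigma(j)}-w_j}{u}z\big)$, which is precisely the divisor restriction recorded in \eqref{asd}.

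Next I would transport this along the ring isomorphism $(\Pi_{\mathrm{w}}^{*})^{-1}$. It carries each $\mathrm{a}\tilde{\xi}_{\bullet}$ to $\mathrm{w}\tilde{\xi}_{\bullet}$ and respects products, so the left-hand side becomes $\mathrm{w}\tilde{\xi}_{s_d}\,\mathrm{w}\tilde{\xi}_{\sigma}$ and the new terms become $\sum\mathrm{w}\tilde{\xi}_{(ij)\sigma}$, yielding exactly \eqref{Pieri}. Here the coefficient is naturally an element of $\mathbb{Q}[K^{*}]$ acting through the identification $H^{*}_{T_{\mathrm{w}}}(\mathrm{w}\Sigma)\cong H^{*}_{K}(\mathrm{a}\Sigma^{\times})$, which is why the variable $z$ persists; restricting to a fixed point $\tau$ one may replace $z$ by $y_{\tau}$ using $z\equiv y_{\tau}$ in $\mathbb{Q}[K_{\tau}^{*}]=\mathbb{Q}[K^{*}]/\langle y_{\tau}-z\rangle$, recovering the pure $\mathbb{Q}[T_{\mathrm{w}}^{*}]$-coefficient on the diagonal.

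The main obstacle I anticipate is making the last paragraph's bookkeeping rigorous: confirming that $(\Pi_{\mathrm{w}}^{*})^{-1}$ is genuinely a ring isomorphism intertwining the basis classes, and clarifying that the stated coefficient lives in $\mathbb{Q}[K^{*}]$ rather than in $\mathbb{Q}[T_{\mathrm{w}}^{*}]$. A safe alternative that bypasses this is to prove \eqref{Pieri} by localization: since the restriction map of Theorem \ref{thm4.1} is injective, it suffices to check the identity after restriction to each $\tau\in W^{P}$, and every such check reduces --- via the squares of \eqref{CommuDiag} together with the substitution $z\equiv y_{\tau}$ --- to the corresponding scalar identity in $H^{*}_{T}(\Sigma)$, which holds by the classical formula. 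One must also verify that the index set $\{(ij):i\le d<j,\ l((ij)\sigma)=l(\sigma)+1\}$ is exactly the one produced by Monk's formula on the quotient $W^{P}$, so that each new term appears with coefficient $1$, i.e. $\langle\omega_d,(e_i-e_j)^{\vee}\rangle=1$.
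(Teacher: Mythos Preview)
Your proposal is correct and follows essentially the same route as the paper: start from the Kostant--Kumar equivariant Chevalley formula on $\Sigma$, push forward along the ring map $\Pi^{*}$ to $H^{*}_{K}(\mathrm{a}\Sigma^{\times})$, rewrite the diagonal coefficient via \eqref{asd}, and then apply the ring isomorphism $(\Pi_{\mathrm{w}}^{*})^{-1}$. Your additional remarks about the coefficient living in $\mathbb{Q}[K^{*}]$ rather than $\mathbb{Q}[T_{\mathrm{w}}^{*}]$ and the alternative localization check are more careful than what the paper records, but the core argument is identical.
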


\begin{proof}
We have the following formula of Kostant-Kumar \cite[Proposition 4.30]%
{kostant1986nil} in the case of partial flag variety $\Sigma$
\begin{equation*}
\tilde{\xi}_{s_d}\tilde{\xi}_{\sigma}= \tilde{\xi}_{s_d}|_{\sigma}\tilde{\xi}%
_{\sigma}+\mathop{\sum_{i\leq d_k< j}}\limits_{l((ij)\sigma)=l(\sigma)+1}%
\tilde{\xi}_{(ij)\sigma}.
\end{equation*}
By applying $\Pi^*$ to the above equation we get
\begin{equation*}
{\mathrm{a}}\tilde{\xi}_{s_d}{\mathrm{a}}\tilde{\xi}_{\sigma}={\mathrm{a}}%
\tilde{\xi}_{s_d}|_{\sigma}{\mathrm{a}}\tilde{\xi}_{\sigma}+%
\mathop{\sum_{i\leq d_k< j}}\limits_{l((ij)\sigma)=l(\sigma)+1}{\mathrm{a}}%
\tilde{\xi}_{(ij)\sigma}.
\end{equation*}
An application of $(\Pi_{{\mathrm{w}}}^*)^{-1}$ along with the use of %
\eqref{asd} we obtain the result.
\end{proof}

We define  a special Schubert class  $\tilde{\xi}_{{\mathrm{div}}}$ given by \eqref{div}, which corresponds to length one elements \(s_{d}\) in $W^P$. We use it to prove  Proposition  \ref{prop:div}.
\begin{equation}
\label{div}
\tilde{\xi}_{{\mathrm{div}}} :=\mathop{\sum}\limits_{i=1}^{r}\tilde{\xi}%
_{s_{d_i}}.
\end{equation}
Recall that for $\sigma\in W^P$,
\begin{equation*}
y_{\sigma}=r \left(y_{\sigma_{1}}+\cdots + y_{\sigma_{d_1}}\right)
+(r-1)\left(y_{\sigma_{d_1+1}}+\cdots+y_{\sigma_{d_2}}\right)+\cdots+%
\left(y_{\sigma_{d_{r-1}+1}}+\cdots+ y_{\sigma_{d_r}}\right)
\end{equation*}
By using the GKM description for simple transpositions and \eqref{asd}, we
get
\begin{equation*}
\begin{array}{rcl}
\tilde{\xi}_{{\mathrm{div}}}|_{\tau} & = & \tilde{\xi}_{s_{d_1}}|_{\tau}+%
\dotsb+\tilde{\xi}_{s_{d_r}}|_{\tau} \\
& = & \mathop{\sum}\limits_{j=1}^{d_1}y_{\tau(j)}-\mathop{\sum}%
\limits_{j=1}^{d_1} y_j+\dotsb+\mathop{\sum}\limits_{j=1}^{d_r}y_{\tau(j)}-%
\mathop{\sum}\limits_{j=1}^{d_r} y_j \\
& = & y_{\tau}-y_{\mathrm{id}}%
\end{array}%
\end{equation*}
Now using
\begin{equation*}
y_{\tau}-y_{\mathrm{id}}=z-y_{\mathrm{id}} \;\text{ in }\;{\mathbb{Q}}%
[K^*]/\langle y_{\tau}-z\rangle,
\end{equation*}
we have
\begin{equation}
{\mathrm{a}}\tilde{\xi}_{{\mathrm{div}}} |_{\tau}=z- y_{\mathrm{id.}}
\end{equation}
\subsection{Weighted Schubert Polynomials}

To define the weighted Schubert polynomials, we recall the definition of
double (equivariant) Schubert polynomial. To give the description, we recall
the following definition and notations which can also be found in \cite%
{manivel2001symmetric}.

\begin{defi}
Let $x_i$, $i=1,\ldots,n$ and $b_j$, $j\in \mathbb{N}$ be indeterminates.
Let ${\mathbb{Q}}[x]:={\mathbb{Q}}[x_1,\ldots, x_n]$ be the polynomial ring
over ${\mathbb{Q}}$. Let ${\mathbb{Q}}[x,b]:={\mathbb{Q}}[x]\otimes_{{%
\mathbb{Q}}}{\mathbb{Q}}[b]$. For each $\sigma \in S_{\infty}:=\cup_{k\geq
1}S_k$, the double (equivariant) Schubert polynomials $\mathfrak{S}_{\sigma}$
are defined as follows:
\begin{equation}
\mathfrak{S}_{\sigma}(x,b):= \mathop{\sum_{\si=\tau^{-1}\mu}}%
\limits_{l(\sigma)=l(\tau)+l(\mu)}\mathfrak{S}_{\tau}(x)\mathfrak{S}%
_{\mu}(-b)
\end{equation}
where $\mathfrak{S}_{\tau} $ is the usual Schubert polynomial.
\end{defi}

The usual and double Schubert polynomials have the following well-known
properties:

\begin{enumerate}
\item $\mathfrak{S}_{\sigma_0}(x)=x_1^{n-1}x_2^{n-2}\ldots x_{n-1}$ and $%
\mathfrak{S}_{s_d}(x)=x_1+x_2+\dotsb +x_d$.

\item $\mathfrak{S}_{\sigma_0}(x,b)=\prod_{i+j\leq n}(x_i-b_j)$, where $%
\sigma_0$ is the unique maximal element in $S_n$.

\item $\mathfrak{S}_{\mathrm{id}}(x)=\mathfrak{S}_{\mathrm{id}}(x,b)=1.$
\end{enumerate}

It is well known that the set $\{\mathfrak{S}_{\sigma}\}_{\sigma \in
S_{\infty}}$ forms a ${\mathbb{Q}}[b]$-basis of ${\mathbb{Q}}[x,b]$. We can
regard $H^*_T{\Sigma}$ as a ${\mathbb{Q}}[b]$-module by using the projection
map ${\mathbb{Q}}[b]\rightarrow H^*(BT)={\mathbb{Q}}[y_1,\ldots, y_n]$ that
sends $b_j$ to $y_j$, for $j=1,\ldots,n$ and to $0$ for $j>n$. Thus we have
a surjective homomorphism
\begin{equation*}
\theta: {\mathbb{Q}}[x,b]\rightarrow H^*_T(\Sigma)
\end{equation*}
of algebras over ${\mathbb{Q}}[b]$ which maps the polynomial $\mathfrak{S}%
_{\sigma}$ to the Schubert class $\tilde{\xi}_{{\sigma}^{-1}}$ of
codimension $l(\sigma)$, if $\sigma \in W^P$ and to zero otherwise (see \cite%
{kaji2010schubert}).

\begin{teo}
\label{theta}  The ring homomorphisms
\begin{equation*}
\theta_{{\mathrm{a}}}: {\mathbb{Q}}[x,b]\rightarrow H^*_K({\mathrm{a}}%
\Sigma^{\times}) \mathrm{\ \ \ \ and\ \ \ } \theta_{{\mathrm{w}}}: {\mathbb{Q%
}}[x,b]\rightarrow H^*_{T_{{\mathrm{w}}}}({\mathrm{w}}\Sigma)
\end{equation*}
are surjective,  where $\theta_{{\mathrm{a}}}:=\Pi^*\circ \theta$ and $%
\theta_{{\mathrm{w}}}:=(\Pi_{{\mathrm{w}}}^*)^{-1}\circ \theta_{{\mathrm{a}}}
$. Here $\mathfrak{S}_{\sigma}$ is mapped to ${\mathrm{a}}\tilde{\xi}%
_{\sigma^{-1}}$ by $\theta_{\mathrm{a}}$ and to ${\mathrm{w}}\tilde{\xi}%
_{\sigma^{-1}}$ by $\theta_{\mathrm{w}}$, if $\sigma\in W^P$ and to zero
otherwise.
\end{teo}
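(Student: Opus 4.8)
The plan is to exploit the factorizations $\theta_{{\mathrm{a}}}=\Pi^*\circ\theta$ and $\theta_{{\mathrm{w}}}=(\Pi_{{\mathrm{w}}}^*)^{-1}\circ\theta_{{\mathrm{a}}}$ directly. Since $\theta$ is already known to be a surjective ${\mathbb{Q}}[b]$-algebra homomorphism onto $H^*_T(\Sigma)$ (via \cite{kaji2010schubert}), and both $\Pi^*$ and $(\Pi_{{\mathrm{w}}}^*)^{-1}$ are ring homomorphisms, the composites $\theta_{{\mathrm{a}}}$ and $\theta_{{\mathrm{w}}}$ are automatically ring homomorphisms; here I use that $\Pi_{{\mathrm{w}}}^*$ is an isomorphism, so that $(\Pi_{{\mathrm{w}}}^*)^{-1}$ is defined on all of $H^*_K({\mathrm{a}}\Sigma^\times)$. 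Thus the two substantive claims are the image formula and surjectivity, and I would reduce both to properties of the geometric maps $\Pi^*$ and $\Pi_{{\mathrm{w}}}^*$.

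For the image formula I would chase $\mathfrak{S}_\sigma$ through the two composites. By the description of $\theta$ recalled before the theorem, $\theta(\mathfrak{S}_\sigma)=\tilde{\xi}_{\sigma^{-1}}$ when $\sigma\in W^P$ and $\theta(\mathfrak{S}_\sigma)=0$ otherwise. Applying $\Pi^*$ and invoking the definition ${\mathrm{a}}\tilde{\xi}_{\sigma^{-1}}:=\Pi^*(\tilde{\xi}_{\sigma^{-1}})$ gives $\theta_{{\mathrm{a}}}(\mathfrak{S}_\sigma)={\mathrm{a}}\tilde{\xi}_{\sigma^{-1}}$ (resp. $0$); applying $(\Pi_{{\mathrm{w}}}^*)^{-1}$ and the definition ${\mathrm{w}}\tilde{\xi}_{\sigma^{-1}}:=(\Pi_{{\mathrm{w}}}^*)^{-1}({\mathrm{a}}\tilde{\xi}_{\sigma^{-1}})$ then gives $\theta_{{\mathrm{w}}}(\mathfrak{S}_\sigma)={\mathrm{w}}\tilde{\xi}_{\sigma^{-1}}$ (resp. $0$). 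This part is purely formal, following the definitions of the cone and weighted Schubert classes.

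Surjectivity then reduces to the single statement that $\Pi^*\colon H^*_T(\Sigma)\to H^*_K({\mathrm{a}}\Sigma^\times)$ is surjective: indeed $\theta$ is onto $H^*_T(\Sigma)$, so $\theta_{{\mathrm{a}}}=\Pi^*\circ\theta$ is onto as soon as $\Pi^*$ is, and since $\Pi_{{\mathrm{w}}}^*$ is an isomorphism the same conclusion follows for $\theta_{{\mathrm{w}}}$. To prove $\Pi^*$ surjective I would use the GKM descriptions of Theorems \ref{thm4.1} and \ref{thm4.2} together with the commutative diagram \eqref{CommuDiag}. On each fixed point the component map $\kappa_\sigma^*\colon{\mathbb{Q}}[T^*]\to{\mathbb{Q}}[K_\sigma^*]={\mathbb{Q}}[K^*]/\langle y_\sigma-z\rangle$ is an isomorphism, because the relation $y_\sigma=z$ merely eliminates the variable $z$; moreover the defining GKM congruences on the two sides correspond under the equality of ideals $\langle y_i-y_j\rangle=\langle y_\sigma-y_\tau\rangle$ recorded in \eqref{GKMstraight2}. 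Given any GKM tuple $(Q_\sigma)$ for the cone, setting $\alpha_\sigma:=(\kappa_\sigma^*)^{-1}(Q_\sigma)$ then produces a tuple satisfying the $H^*_T(\Sigma)$ conditions whose image under $\Pi^*$ is $(Q_\sigma)$.

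The main obstacle I anticipate is the scalar variable $z$. A priori $\Pi^*$ only transports classes living on $\Sigma$, whose coefficients come from $H^*(BT)={\mathbb{Q}}[y_1,\ldots,y_n]$, so it is not evident that $z\in H^*(BK)$ lies in the image. This is precisely where the divisor class $\tilde{\xi}_{{\mathrm{div}}}$ of \eqref{div} enters: the tuple $(y_\tau)_{\tau\in W^P}$ satisfies the $H^*_T(\Sigma)$ GKM condition by \eqref{GKMstraight2} and equals $\tilde{\xi}_{{\mathrm{div}}}+y_{{\mathrm{id}}}\cdot 1$, while its restriction at the fixed point $\tau$ reduces modulo $\langle y_\tau-z\rangle$ to $z$; hence $z$ is hit by $\Pi^*$. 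Combined with the fixed-point isomorphisms above, this upgrades $\Pi^*$ to a surjection (in fact an isomorphism), and the theorem follows.
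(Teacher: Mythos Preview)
Your proposal is correct and follows essentially the same approach as the paper: compose $\theta$ with the maps $\Pi^*$ and $(\Pi_{\mathrm{w}}^*)^{-1}$ from the commutative diagram \eqref{CommuDiag}, and read off both surjectivity and the image formula from the corresponding properties of $\theta$. The paper's own proof is a single sentence invoking the ``isomorphisms in the commutative diagram'' (it treats $\Pi^*$ as an isomorphism outright, as is later used explicitly in Corollary~\ref{homo}); your additional GKM/divisor-class argument for the surjectivity of $\Pi^*$ is extra justification for a fact the paper simply asserts, not a different route.
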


\begin{proof}
By composing the above map $\theta$ with isomorphisms in the commutative
diagram \eqref{CommuDiag}, we get the required result.
\end{proof}

By definition of double Schubert polynomial, we have
\begin{equation*}
\mathfrak{S}_{s_d}(x,b)=\mathfrak{S}_{s_d}(x)\mathfrak{S}_{\mathrm{id}}(-b)+%
\mathfrak{S}_{\mathrm{id}}(x)\mathfrak{S}_{s_d}(-b).
\end{equation*}
Then by the standard properties of the Schubert polynomial $\mathfrak{S}(x)$%
,
\begin{equation*}
\mathfrak{S}_{s_d}(x,b)=x_1+\dotsb+x_d-(b_1+\dotsb+b_d).
\end{equation*}
We now define the double Schubert polynomial corresponding to
\(\tilde{\xi}_{{\mathrm{div}}}\) as follows.
\begin{equation*}
\mathfrak{S}_{{\mathrm{div}}}(x,b):=\sum_{i=1}^{r}\mathfrak{S}%
_{s_{d_i}}(x,b).
\end{equation*}
So we have $\mathfrak{S}_{{\mathrm{div}}}(x,b)=x_{\mathrm{id}}-b_{\mathrm{id}%
}$, where\newline
$x_{\mathrm{id}}:=r \left(x_{1}+\cdots + x_{d_1}\right)
+(r-1)\left(x_{d_1+1}+\cdots+x_{d_2}\right)+\cdots+\left(x_{d_{r-1}+1}+%
\cdots+ x_{d_r}\right).$

\begin{coro}
\label{homo} The map
\begin{equation*}
\theta_{{\mathrm{a}}}: {\mathbb{Q}}[x,b]\rightarrow H^*_K({\mathrm{a}}%
\Sigma^{\times})
\end{equation*}
is a homomorphism of algebras over ${\mathbb{Q}}[x_{\mathrm{id}},b_1,b_2,
\ldots]$.
\end{coro}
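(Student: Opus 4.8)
The plan is to reduce the statement to a single computation inside the base ring $H^*(BK)=\mathbb{Q}[K^*]=\mathbb{Q}[y_1,\ldots,y_n,z]$, namely the identification of the images under $\theta_{\mathrm{a}}$ of the generators of $R:=\mathbb{Q}[x_{\mathrm{id}},b_1,b_2,\ldots]$. Since $x_{\mathrm{id}}$ is a linear form in $x_1,\ldots,x_n$ while the $b_j$ are independent variables, $R$ is itself a polynomial ring, so any assignment of its generators to a commutative $\mathbb{Q}$-algebra extends uniquely to a ring homomorphism. I would give $\mathbb{Q}[x,b]$ the $R$-algebra structure arising from the inclusion $R\hookrightarrow\mathbb{Q}[x,b]$, and give $H^*_K(\mathrm{a}\Sigma^\times)$ the $R$-algebra structure induced by the ring homomorphism $\phi\colon R\to H^*(BK)$ sending $x_{\mathrm{id}}\mapsto z$, $b_j\mapsto y_j$ for $1\le j\le n$, and $b_j\mapsto 0$ for $j>n$, followed by the structure map $H^*(BK)\to H^*_K(\mathrm{a}\Sigma^\times)$.

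Since $\theta_{\mathrm{a}}$ is already a ring homomorphism by Theorem~\ref{theta}, what remains is $R$-linearity, and for this it suffices to check that $\theta_{\mathrm{a}}$ and the composite $R\xrightarrow{\phi}H^*(BK)\to H^*_K(\mathrm{a}\Sigma^\times)$ agree on the generators of $R$; being ring homomorphisms that coincide on generators of a polynomial ring, they will then be equal. The behaviour on the $b_j$ is immediate from the definition of $\theta$: one has $\theta_{\mathrm{a}}(b_j)=\Pi^*(\theta(b_j))=\pi^*(y_j)=y_j$ for $j\le n$ and $\theta_{\mathrm{a}}(b_j)=\Pi^*(0)=0$ for $j>n$, exactly matching $\phi$. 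The whole content therefore lies in establishing $\theta_{\mathrm{a}}(x_{\mathrm{id}})=z$.

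To prove this I would start from $\mathfrak{S}_{\mathrm{div}}(x,b)=x_{\mathrm{id}}-b_{\mathrm{id}}$ and $\theta(\mathfrak{S}_{\mathrm{div}})=\sum_{i=1}^{r}\tilde\xi_{s_{d_i}}=\tilde\xi_{\mathrm{div}}$, so that $\theta(x_{\mathrm{id}})=\tilde\xi_{\mathrm{div}}+\theta(b_{\mathrm{id}})=\tilde\xi_{\mathrm{div}}+y_{\mathrm{id}}$ in $H^*_T(\Sigma)$. Applying $\Pi^*$ gives $\theta_{\mathrm{a}}(x_{\mathrm{id}})=\mathrm{a}\tilde\xi_{\mathrm{div}}+y_{\mathrm{id}}$, where $y_{\mathrm{id}}$ is now understood in $H^*(BK)$ via $\pi^*$. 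The point is then that the restriction computation already recorded, $\mathrm{a}\tilde\xi_{\mathrm{div}}|_\tau=z-y_{\mathrm{id}}$ for every $\tau\in W^P$, shows that $\mathrm{a}\tilde\xi_{\mathrm{div}}$ and the global class $z-y_{\mathrm{id}}\in H^*(BK)$ have identical restrictions at all fixed points; by injectivity of the restriction map for $\mathrm{a}\Sigma^\times$ (see \eqref{(3)}) these classes coincide, whence $\theta_{\mathrm{a}}(x_{\mathrm{id}})=(z-y_{\mathrm{id}})+y_{\mathrm{id}}=z$.

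I expect the main obstacle to be precisely this identification $\theta_{\mathrm{a}}(x_{\mathrm{id}})=z$: one must recognize that, although $x_{\mathrm{id}}$ is not itself a $b$-variable, its image lands in the base subring $H^*(BK)$ rather than in a higher Schubert class, and one must use GKM injectivity to promote the fixed-point restriction formula to an equality of global classes so that the $y_{\mathrm{id}}$ contributions cancel. Once $\theta_{\mathrm{a}}(x_{\mathrm{id}})=z$ is in hand, the $R$-algebra statement follows formally from the polynomial-ring property of $R$ together with the fact that $\theta_{\mathrm{a}}$ is a ring homomorphism.
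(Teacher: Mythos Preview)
Your proposal is correct and follows essentially the same route as the paper: both arguments reduce the corollary to showing $\theta_{\mathrm{a}}(x_{\mathrm{id}})=z$, obtain this from $\mathfrak{S}_{\mathrm{div}}(x,b)=x_{\mathrm{id}}-b_{\mathrm{id}}$ together with $\theta(\mathfrak{S}_{\mathrm{div}})=\tilde\xi_{\mathrm{div}}$, and then conclude by extending the already-known $\mathbb{Q}[b]$-linearity. The only cosmetic difference is that the paper phrases the key identification as $(\Pi^*)^{-1}(z)=\tilde\xi_{\mathrm{div}}+y_{\mathrm{id}}$ via the isomorphism $\Pi^*$, whereas you justify $\mathrm{a}\tilde\xi_{\mathrm{div}}=z-y_{\mathrm{id}}$ by appealing to the injectivity of the restriction map \eqref{(3)}; these are equivalent justifications of the same equality.
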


\begin{proof}
As a consequence of the isomorphism $\Pi^*: H^*_T(\Sigma)\rightarrow H^*_K({%
\mathrm{a}}\Sigma^{\times})$ and the definition \eqref{div} of $\tilde{\xi}_{{\mathrm{div%
}}}$, we have
\begin{equation*}
(\Pi^*)^{-1}(z)=\tilde{\xi}_{{\mathrm{div}}}+y_{\mathrm{id}}.
\end{equation*}
This and the expression for $\mathfrak{S}_{{\mathrm{div}}}(x,b)$ give
\begin{equation*}
\theta_{{\mathrm{a}}}(x_{\mathrm{id}})={\mathrm{a}}\tilde{\xi}_{{\mathrm{div}%
}}+y_{\mathrm{id}}=z.
\end{equation*}
By Theorem \ref{theta}, $\theta_{{\mathrm{a}}}$ is a homomorphism of
algebras over ${\mathbb{Q}}[b]$. Thus we have extended the ring of
coefficients to ${\mathbb{Q}}[x_{\mathrm{id}},b_1,b_2, \ldots]$ by mapping $%
x_{\mathrm{id}}$ to $z$, $b_i$ onto $y_i$ for $1\leq i\leq n$ and $b_i$ to
zero for $i>n$.
\end{proof}

Let $w_l\in {\mathbb{Z}}_{\geq 0}$, for $l\in {\mathbb{N}}$ be such that the
first $n$ terms $w_1,\ldots,w_n$ are the same as weights chosen to define
the weighted flag variety ${\mathrm{w}}\Sigma$. Let
\begin{equation*}
b^{\mathrm{w}}_l:= b_l+\frac{w_l}{u}x_{\mathrm{id}},\ \ \ l\in {\mathbb{N}}.
\end{equation*}
As $\{b_l^{\mathrm{w}}\ : \ l\in {\mathbb{N}} \}$ is a set of algebraically
independent variables so ${\mathbb{Q}}[b^{\mathrm{w}}]:={\mathbb{Q}}[b_1^{%
\mathrm{w}},b_2^{\mathrm{w}},\ldots]$ is a polynomial ring. Moreover, we
have a canonical isomorphism of rings
\begin{equation*}
{\mathbb{Q}}[x,b]\cong {\mathbb{Q}}[x,b^{\mathrm{w}}]
\end{equation*}
via
\begin{equation}  \label{isomor}
x_i \mapsto x_i \text{ and } b_l\mapsto b_l^{\mathrm{w}}-(w_l/u)x_{\mathrm{id%
}}.
\end{equation}%
The following result follows from Corollary \ref{homo}.

\begin{prop}
\label{prop:div}
The map $\theta_{w}: {\mathbb{Q}}[x,b^{\mathrm{w}}]\rightarrow H_T^*({%
\mathrm{w}}\Sigma)$ is a surjective homomorphism of algebras over ${\mathbb{Q%
}}[b^{\mathrm{w}}]$ via the following projection
\begin{equation*}
{\mathbb{Q}}[b^{\mathrm{w}}]\rightarrow {\mathbb{Q}}[y_1^{\mathrm{w}}%
,\ldots,y_n^{\mathrm{w}}],
\end{equation*}
$b_l^{\mathrm{w}}\mapsto y_l^{\mathrm{w}}$ for $l=1,\ldots, n$ and $b_l^{%
\mathrm{w}}\mapsto 0$, otherwise.
\end{prop}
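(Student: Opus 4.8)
The plan is to obtain the statement as a short consequence of Corollary \ref{homo}: I would transport the coefficient ring along the change of variables \eqref{isomor}, apply the isomorphism $(\Pi_{\mathrm{w}}^*)^{-1}$, and then identify the restriction of $\theta_{\mathrm{w}}$ to $\mathbb{Q}[b^{\mathrm{w}}]$ with the asserted projection. Surjectivity is already recorded in Theorem \ref{theta}, so the only new content is the $\mathbb{Q}[b^{\mathrm{w}}]$-algebra structure.

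First I would recall from Corollary \ref{homo} that $\theta_{\mathrm{a}}$ is a homomorphism of algebras over $\mathbb{Q}[x_{\mathrm{id}},b_1,b_2,\ldots]$, with $\theta_{\mathrm{a}}(x_{\mathrm{id}})=z$, $\theta_{\mathrm{a}}(b_l)=y_l$ for $l\le n$, and $\theta_{\mathrm{a}}(b_l)=0$ for $l>n$. Since $b_l^{\mathrm{w}}=b_l+(w_l/u)x_{\mathrm{id}}$, the substitution \eqref{isomor} is triangular and invertible, so $\{x_{\mathrm{id}},b_1^{\mathrm{w}},b_2^{\mathrm{w}},\ldots\}$ generates the same subring as $\{x_{\mathrm{id}},b_1,b_2,\ldots\}$; in particular $\mathbb{Q}[b^{\mathrm{w}}]$ lies inside this coefficient ring, whence $\theta_{\mathrm{a}}$ is a fortiori a homomorphism of algebras over $\mathbb{Q}[b^{\mathrm{w}}]$. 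Because $\theta_{\mathrm{w}}=(\Pi_{\mathrm{w}}^*)^{-1}\circ\theta_{\mathrm{a}}$ and $(\Pi_{\mathrm{w}}^*)^{-1}$ is a ring isomorphism, $\theta_{\mathrm{w}}$ is again a $\mathbb{Q}[b^{\mathrm{w}}]$-algebra homomorphism, and it remains only to compute its structure map on the generators $b_l^{\mathrm{w}}$. For $l\le n$ I would compute $\theta_{\mathrm{a}}(b_l^{\mathrm{w}})=\theta_{\mathrm{a}}(b_l)+(w_l/u)\theta_{\mathrm{a}}(x_{\mathrm{id}})=y_l+(w_l/u)z$, which by \eqref{yw} is precisely the image of $y_l^{\mathrm{w}}$ in $H^*_K({\mathrm{a}}\Sigma^\times)$; then the commutativity of the right-hand square of \eqref{CommuDiag}, together with the fact that $\pi_{\mathrm{w}}^*$ is the inclusion $\mathbb{Q}[T_{\mathrm{w}}^*]\hookrightarrow\mathbb{Q}[K^*]$, gives $(\Pi_{\mathrm{w}}^*)^{-1}(y_l^{\mathrm{w}})=y_l^{\mathrm{w}}$ and hence $\theta_{\mathrm{w}}(b_l^{\mathrm{w}})=y_l^{\mathrm{w}}$. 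This matches the projection on the $n$ relevant variables, and a ring map whose restriction to $\mathbb{Q}[b^{\mathrm{w}}]$ is that projection is automatically linear over $\mathbb{Q}[b^{\mathrm{w}}]$ for the induced module structure.

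The step I expect to need the most care, and which is not formal, is the identification $(\Pi_{\mathrm{w}}^*)^{-1}(y_l^{\mathrm{w}})=y_l^{\mathrm{w}}$: it rests on diagram \eqref{CommuDiag} and on $y_l^{\mathrm{w}}\in\mathbb{Q}[T_{\mathrm{w}}^*]$ being genuinely pulled back from $H^*(BT_{\mathrm{w}})$ via $\pi_{\mathrm{w}}^*$. The remaining delicate point is the bookkeeping for the auxiliary indices $l>n$: there $\theta_{\mathrm{a}}(b_l^{\mathrm{w}})=(w_l/u)z$, and for $\theta_{\mathrm{w}}(b_l^{\mathrm{w}})$ to vanish — as the projection $b_l^{\mathrm{w}}\mapsto 0$ demands — one uses that these extra weights are taken to be $0$, so that $b_l^{\mathrm{w}}=b_l$ behaves exactly like the discarded variables of the unweighted map $\theta$. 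With these two points checked, everything else is the routine transport of the $\mathbb{Q}[b]$-algebra structure through the two ring isomorphisms.
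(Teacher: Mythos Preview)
Your argument is correct and follows exactly the route the paper intends: the paper's proof is literally the single sentence ``The following result follows from Corollary~\ref{homo},'' and you have spelled out precisely how---transporting the coefficient ring from $\mathbb{Q}[x_{\mathrm{id}},b_1,b_2,\ldots]$ to $\mathbb{Q}[b^{\mathrm{w}}]$ via \eqref{isomor}, then pushing through $(\Pi_{\mathrm{w}}^*)^{-1}$.

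One remark: your caution about the case $l>n$ is well placed and in fact sharper than the paper. As written, the paper allows arbitrary $w_l\in\mathbb{Z}_{\ge 0}$ for $l>n$, yet claims $b_l^{\mathrm{w}}\mapsto 0$; your computation $\theta_{\mathrm{a}}(b_l^{\mathrm{w}})=(w_l/u)z$ shows this forces $w_l=0$ for $l>n$ (since $(\Pi_{\mathrm{w}}^*)^{-1}(z)=\mathrm{w}\tilde{\xi}_{\mathrm{div}}+y_{\mathrm{id}}^{\mathrm{w}}\neq 0$). This is a genuine, if minor, imprecision in the paper that you have correctly resolved.
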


Using the isomorphism of algebras ${\mathbb{Q}}[x,b]\cong {\mathbb{Q}}[x,b^{%
\mathrm{w}}]$ defined by \eqref{isomor}, we are now able to define weighted
Schubert polynomials ${\mathrm{w}} \mathfrak{S}_{\sigma}(x,b^{\mathrm{w}})$.

\begin{defi}
Weighted Schubert polynomials ${\mathrm{w}} \mathfrak{S}_{\sigma}(x,b^{%
\mathrm{w}})$ are defined as images of double Schubert polynomials $%
\mathfrak{S}_{\sigma}(x,b)$ under the isomorphism ${\mathbb{Q}}[x,b]\cong {%
\mathbb{Q}}[x,b^{\mathrm{w}}]$. That is,
\begin{equation*}
{\mathrm{w}} \mathfrak{S}_{\sigma}(x,b^{\mathrm{w}}):= \mathop{\sum_{\si=%
\tau^{-1}\mu}}\limits_{l(\sigma)=l(\tau)+l(\mu)}\mathfrak{S}_{\tau}(x)%
\mathfrak{S}_{\mu}(b_1^{\mathrm{w}}-(w_1/u)x_{\mathrm{id}},b_2^{\mathrm{w}}%
-(w_2/u)x_{\mathrm{id}},\ldots).
\end{equation*}
\end{defi}

We give a description of weighted Schubert classes in terms of weighted
Schubert polynomials. We will use the  following  result of  Kaji on double Schubert polynomials.

\begin{prop}
(\cite[Prop. 4.9]{kaji2011equivariant}) For $\sigma, \tau$ in $W^P$, we have

\begin{equation}  \label{eqSchClass}
\mathfrak{S}_{\sigma}(b_{\tau(1)},\ldots, b_{\tau(n)};b)=\Bigg\{
\begin{array}{cc}
\mathop{\prod}\limits_{(i,j)\in {\mathrm{Inv}}_P(\sigma)}(b_{j}-b_i) &
\tau=\sigma \\
0 & \tau \nsucceq \sigma%
\end{array}%
.
\end{equation}
In particular, for $b_i=y_i, \forall i\in {\mathbb{N}}$, where $y_i=0$ for
all $i>n$, we have
\begin{equation*}
\mathfrak{S}_{\sigma}(b_{\tau(1)},\ldots, b_{\tau(n)};b)= \tilde{\xi}%
_{\sigma}|_{\tau}.
\end{equation*}
\end{prop}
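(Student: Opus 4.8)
The identity \eqref{eqSchClass} is the algebraic shadow of the geometric localization formula \eqref{GKMstraight}: it asserts that the principal specialization of a double Schubert polynomial reproduces the restriction of an equivariant Schubert class to a torus-fixed point, the two displayed cases being the vanishing below the diagonal in Bruhat order and the explicit product on the diagonal. The plan is to prove the polynomial identity first and then read off the ``in particular'' clause by setting $b_i = y_i$ and comparing with \eqref{GKMstraight}. First I would reduce to the symmetric group $S_n$: for a minimal-length coset representative $\sigma \in W^P$ the set $\mathrm{Inv}_P(\sigma)$ coincides with the full inversion set $\mathrm{Inv}(\sigma)$ (a minimal representative has no inversion inside a parabolic block), and the Bruhat order on $W^P$ is the one induced from $S_n$, so neither side of \eqref{eqSchClass} changes if we work in $S_n$ throughout.

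The engine is a recursion produced by the divided-difference operator $\partial_i f = (f - s_i f)/(x_i - x_{i+1})$, where $s_i$ transposes $x_i$ and $x_{i+1}$. Writing $\mathrm{ev}_\tau(f) := f(b_{\tau(1)}, \dots, b_{\tau(n)}; b)$ for the specialization, the elementary identity $f - s_i f = (x_i - x_{i+1})\,\partial_i f$, together with $\mathrm{ev}_\tau(s_i f) = \mathrm{ev}_{\tau s_i}(f)$, yields
\begin{equation*}
\mathrm{ev}_\tau(f) - \mathrm{ev}_{\tau s_i}(f) = (b_{\tau(i)} - b_{\tau(i+1)})\,\mathrm{ev}_\tau(\partial_i f).
\end{equation*}
Feeding in the divided-difference rule for double Schubert polynomials, namely $\partial_i \mathfrak{S}_\sigma = \mathfrak{S}_{\sigma s_i}$ when $i$ is a descent of $\sigma$ and $\partial_i \mathfrak{S}_\sigma = 0$ otherwise, this expresses $\mathrm{ev}_{\tau s_i}(\mathfrak{S}_\sigma)$ in terms of $\mathrm{ev}_\tau(\mathfrak{S}_\sigma)$ and the lower-length term $\mathrm{ev}_\tau(\mathfrak{S}_{\sigma s_i})$.

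I would then dispatch the two cases by separate inductions. For the vanishing $\mathrm{ev}_\tau(\mathfrak{S}_\sigma) = 0$ whenever $\tau \nsucceq \sigma$, I would induct on $\ell(\tau)$ for all $\sigma$ at once, with base case $\tau = \mathrm{id}$ supplied below. In the step, choose a descent $i$ of $\tau$ and set $\tau' = \tau s_i$ with $\ell(\tau') = \ell(\tau) - 1$; the recursion gives $\mathrm{ev}_\tau(\mathfrak{S}_\sigma)$ as a combination of $\mathrm{ev}_{\tau'}(\mathfrak{S}_\sigma)$ and $\mathrm{ev}_{\tau'}(\mathfrak{S}_{\sigma s_i})$. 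The first vanishes by the inductive hypothesis since $\tau' \preceq \tau$ forces $\tau' \nsucceq \sigma$, and the second (present only when $i$ is a descent of $\sigma$) vanishes because the standard lifting property of the Bruhat order, applied to the descent of $\sigma$ and the ascent of $\tau'$ at $i$, turns $\sigma \npreceq \tau' s_i$ into $\sigma s_i \npreceq \tau'$. For the diagonal value I would induct on $\ell(\sigma)$: taking a descent $i$ of $\sigma$, setting $\sigma' = \sigma s_i$, and evaluating the recursion at $\tau = \sigma'$ isolates $\mathrm{ev}_\sigma(\mathfrak{S}_\sigma) = \mathrm{ev}_{\sigma'}(\mathfrak{S}_\sigma) - (b_{\sigma'(i)} - b_{\sigma'(i+1)})\,\mathrm{ev}_{\sigma'}(\mathfrak{S}_{\sigma'})$; here $\mathrm{ev}_{\sigma'}(\mathfrak{S}_\sigma) = 0$ by the vanishing just proved (as $\ell(\sigma) > \ell(\sigma')$), the factor $(b_{\sigma'(i)} - b_{\sigma'(i+1)})$ records exactly the single inversion gained from $\sigma'$ to $\sigma$, and $\mathrm{ev}_{\sigma'}(\mathfrak{S}_{\sigma'})$ is known by induction, so the product is built up one factor at a time.

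The main obstacle is twofold. The base case of the vanishing, $\mathfrak{S}_\sigma(b,b) = 0$ for $\sigma \neq \mathrm{id}$, lies outside the reach of the recursion (one cannot lower $\tau$ below the identity) and must be imported separately; it is the standard diagonal specialization $\mathfrak{S}_\sigma(x,x) = \delta_{\sigma,\mathrm{id}}$ of double Schubert polynomials, already visible from the top-class formula $\mathfrak{S}_{\sigma_0}(x,b) = \prod_{i+j \le n}(x_i - b_j)$, whose diagonal contains the vanishing factors $x_i - b_i$. The more delicate point, rather than a deep difficulty, is the index bookkeeping: the telescoped product emerges naturally indexed by the \emph{values} $\sigma(a)$, so closing the induction is a matter of translating this value-indexed product into the stated form $\prod_{(i,j) \in \mathrm{Inv}_P(\sigma)}(b_j - b_i)$ consistently with the paper's conventions for $\mathfrak{S}_\sigma$, $\mathrm{ev}_\tau$ and $\mathrm{Inv}_P$. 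The cleanest way to sidestep this bookkeeping altogether is to invoke the surjection $\theta$ of Theorem~\ref{theta}, under which $\mathrm{ev}_\tau(\mathfrak{S}_\sigma)$ becomes the restriction of a Schubert class to the fixed point $e_\tau$, and then quote \eqref{GKMstraight} directly.
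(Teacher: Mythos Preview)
The paper does not prove this proposition at all: it is simply quoted from Kaji \cite[Prop.~4.9]{kaji2011equivariant} as a known input, with no argument given. Your divided-difference induction therefore supplies what the paper omits. The strategy is the standard one for such localization identities and is correct: the key recursion $\mathrm{ev}_\tau(f)-\mathrm{ev}_{\tau s_i}(f)=(b_{\tau(i)}-b_{\tau(i+1)})\,\mathrm{ev}_\tau(\partial_i f)$, combined with $\partial_i\mathfrak{S}_\sigma=\mathfrak{S}_{\sigma s_i}$ at a descent, drives both the vanishing (induction on $\ell(\tau)$, lifting property for the Bruhat bookkeeping) and the diagonal value (induction on $\ell(\sigma)$). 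The base case $\mathfrak{S}_\sigma(b;b)=\delta_{\sigma,\mathrm{id}}$ is indeed the one external input needed, and you identify it correctly.

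Two small comments. First, the value-versus-position indexing issue you flag is genuine: your recursion produces the factor $b_{\sigma(i)}-b_{\sigma(i+1)}$, and matching this against the paper's $\prod_{(i,j)\in\mathrm{Inv}_P(\sigma)}(b_j-b_i)$ requires care with the paper's conventions (which are not entirely consistent between \eqref{GKMstraight} and the map $\theta$, where $\mathfrak{S}_\sigma\mapsto\tilde\xi_{\sigma^{-1}}$). This is, as you say, bookkeeping rather than a gap. Second, your proposed shortcut through Theorem~\ref{theta} and \eqref{GKMstraight} is legitimate in the paper's logical order, since $\theta(\mathfrak{S}_\sigma)=\tilde\xi_{\sigma^{-1}}$ is imported from \cite{kaji2010schubert} before this proposition; but to close that route one must also check that the composite of $\theta$ with restriction to $[e_\tau]$ sends $x_i\mapsto y_{\tau(i)}$, a fact about the Borel presentation that the paper uses implicitly but does not state.
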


Consider the map $\alpha_{\tau}:{\mathbb{Q}}[x,b^{\mathrm{w}}]\rightarrow {%
\mathbb{Q}}[b^{\mathrm{w}}]$ defined as
\begin{equation*}
x_i \mapsto b^{\mathrm{w}}_{\tau_i}-(w_{\tau_i}/w_{\tau})b^{\mathrm{w}}%
_{\tau}\;\; \forall \; i=1,\ldots,n,
\end{equation*}
where $\tau_i=\tau(i).$ The image of $x_{\mathrm{id}}$ under the map $%
\alpha_{\tau}$ is $(u/w_{\tau})b_{\tau}^{\mathrm{w}}$. Thus we have:

\begin{equation}  \label{lemma}
\alpha_{\tau}({\mathrm{w}} \mathfrak{S}_{\sigma}(x,b^{\mathrm{w}}))=%
\mathfrak{S}_{\sigma}(d^{\mathrm{w}}_{\tau_1},\dots,d^{\mathrm{w}}%
_{\tau_n},d^{\mathrm{w}}_1,d^{\mathrm{w}}_2,\ldots)
\end{equation}
for $\sigma,\tau\in W^P$, where $d^{\mathrm{w}}_{\tau_i}=b^{\mathrm{w}}%
_{\tau_i}-(w_{\tau_i}/w_{\tau})b^{\mathrm{w}}_{\tau}.$

\begin{lema}
For $\sigma, \tau$ in $W^P$, we have

\begin{equation*}
{\mathrm{w}} \tilde{\xi}_{\sigma}|\tau=\alpha_{\tau}({\mathrm{w}} \mathfrak{S%
}_{\sigma}(x,b^{\mathrm{w}}))|_{b_i^{\mathrm{w}}=y_i^{\mathrm{w}}}.
\end{equation*}
\end{lema}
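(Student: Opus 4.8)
The plan is to feed the substitution identity \eqref{lemma} into the quoted result of Kaji \eqref{eqSchClass} and then read off the answer from the GKM description \eqref{GKMweighted}. First I would apply \eqref{lemma} to rewrite the right-hand side as
\[
\alpha_{\tau}({\mathrm{w}} \mathfrak{S}_{\sigma}(x,b^{\mathrm{w}}))=\mathfrak{S}_{\sigma}(d^{\mathrm{w}}_{\tau_1},\dots,d^{\mathrm{w}}_{\tau_n},d^{\mathrm{w}}_1,d^{\mathrm{w}}_2,\ldots),
\]
and then observe that this is exactly the double Schubert polynomial in the shape $\mathfrak{S}_{\sigma}(\hat b_{\tau(1)},\dots,\hat b_{\tau(n)};\hat b)$ of \eqref{eqSchClass}, with the second family of variables specialized to $\hat b_k:=d^{\mathrm{w}}_k$. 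Since \eqref{eqSchClass} is an identity of polynomials in those variables it survives the specialization, so after setting $b_i^{\mathrm{w}}=y_i^{\mathrm{w}}$ the right-hand side is $0$ when $\tau\nsucceq\sigma$ and equals $\prod_{(i,j)\in {\mathrm{Inv}}_P(\sigma)}(d^{\mathrm{w}}_j-d^{\mathrm{w}}_i)\big|_{b^{\mathrm{w}}=y^{\mathrm{w}}}$ when $\tau=\sigma$. The first case matches the lower branch of \eqref{GKMweighted} immediately.

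For $\tau=\sigma$ one has $d^{\mathrm{w}}_k|_{b^{\mathrm{w}}=y^{\mathrm{w}}}=y^{\mathrm{w}}_k-(w_k/w_{\sigma})y^{\mathrm{w}}_{\sigma}$, and the task becomes a factor-by-factor comparison with the leading term $h_{\sigma}\prod_{(i,j)}\big(\tfrac{w_{(ij)\sigma}}{w_{\sigma}}y^{\mathrm{w}}_{\sigma}-y^{\mathrm{w}}_{(ij)\sigma}\big)$ of \eqref{GKMweighted}. For a single inversion $(i,j)$ let $\lambda_{ij}$ be the block scalar already used in passing from \eqref{GKMstraight} to \eqref{GKMstraight2}, so that $y_{\sigma}-y_{(ij)\sigma}=\lambda_{ij}(y_i-y_j)$; because $w_{\bullet}$ and $y_{\bullet}$ are assembled by the identical block-coefficient recipe of \eqref{4}, the same scalar gives $w_{\sigma}-w_{(ij)\sigma}=\lambda_{ij}(w_i-w_j)$. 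Substituting $w_{(ij)\sigma}=w_{\sigma}-\lambda_{ij}(w_i-w_j)$ into the GKM factor and simplifying yields
\[
\tfrac{w_{(ij)\sigma}}{w_{\sigma}}y^{\mathrm{w}}_{\sigma}-y^{\mathrm{w}}_{(ij)\sigma}=-\lambda_{ij}\Big(y^{\mathrm{w}}_j-y^{\mathrm{w}}_i-\tfrac{w_j-w_i}{w_{\sigma}}y^{\mathrm{w}}_{\sigma}\Big)=-\lambda_{ij}\,(d^{\mathrm{w}}_j-d^{\mathrm{w}}_i),
\]
so each $d^{\mathrm{w}}_j-d^{\mathrm{w}}_i$ is $(-1/\lambda_{ij})$ times the corresponding GKM factor. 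Taking the product over all inversions, the accumulated scalar $\prod_{(i,j)}(-1/\lambda_{ij})$ is precisely $h_{\sigma}$, because the straight identity \eqref{GKMstraight}$=$\eqref{GKMstraight2} forces $h_{\sigma}\prod_{(i,j)}(-\lambda_{ij})=1$. This gives $\prod(d^{\mathrm{w}}_j-d^{\mathrm{w}}_i)|_{b^{\mathrm{w}}=y^{\mathrm{w}}}={\mathrm{w}}\tilde{\xi}_{\sigma}|_{\sigma}$ and settles the case $\tau=\sigma$.

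The remaining fixed points $\tau\succ\sigma$, where \eqref{GKMweighted} records no closed form, I would handle uniformly. Realizing $\mathbb{Q}[T_{\mathrm{w}}^*]\subset\mathbb{Q}[K^*]$ via $y^{\mathrm{w}}_k=y_k+(w_k/u)z$ as in \eqref{yw}, a short computation—using that $w_{\tau}$ carries the extra summand $u$—shows that for every index $l\le n$ occurring (only $b_1,\dots,b_{n-1}$ enter $\mathfrak{S}_{\sigma}$, and the $x$-slots are the $d^{\mathrm{w}}_{\tau(i)}$) one has $d^{\mathrm{w}}_l|_{b^{\mathrm{w}}=y^{\mathrm{w}}}\equiv y_l \pmod{\langle y_{\tau}-z\rangle}$. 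Hence modulo $\langle y_{\tau}-z\rangle$ the polynomial from the first paragraph collapses to $\mathfrak{S}_{\sigma}(y_{\tau(1)},\dots,y_{\tau(n)};y)=\tilde{\xi}_{\sigma}|_{\tau}$ by the ``in particular'' clause of \eqref{eqSchClass}, while the commutative diagram \eqref{CommuDiag} shows that ${\mathrm{w}}\tilde{\xi}_{\sigma}|_{\tau}$ reduces to the same $\tilde{\xi}_{\sigma}|_{\tau}$ under $\kappa_{\tau}^{{\mathrm{w}}*}$. Since $\kappa_{\tau}^{{\mathrm{w}}*}\colon\mathbb{Q}[T_{\mathrm{w}}^*]\to\mathbb{Q}[K^*]/\langle y_{\tau}-z\rangle$ is the affine-linear substitution $y^{\mathrm{w}}_i\mapsto y_i+(w_i/u)y_{\tau}$, whose determinant is $w_{\tau}/u\neq0$ and which is therefore injective, the two elements of $\mathbb{Q}[T_{\mathrm{w}}^*]$ must agree; this argument in fact re-covers the two special cases as well. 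I expect the main obstacle to be bookkeeping the block scalars $\lambda_{ij}$ in the $\tau=\sigma$ computation tightly enough that their product regenerates exactly $h_{\sigma}$ (rather than an extraneous constant or sign), together with verifying the congruence $d^{\mathrm{w}}_l\equiv y_l$ and the nonvanishing of $\det\kappa_{\tau}^{{\mathrm{w}}*}$ that make the uniform argument go through.
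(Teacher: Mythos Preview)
Your proposal is correct and follows essentially the same route as the paper: invoke Kaji's localization identity \eqref{eqSchClass} together with the substitution formula \eqref{lemma}. The paper's proof is a single sentence citing exactly these two ingredients, so your uniform argument in the last paragraph---reducing both sides modulo $\langle y_\tau-z\rangle$ via the isomorphism $\kappa_\tau^{{\mathrm{w}}*}$ and matching them against $\tilde\xi_\sigma|_\tau$---is precisely what the paper leaves implicit. Note that your detailed factor-by-factor comparison for $\tau=\sigma$ (tracking the block scalars $\lambda_{ij}$ and regenerating $h_\sigma$) is correct but redundant: as you yourself observe, the uniform argument already covers that case, and the paper does not carry out any such explicit check.
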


\begin{proof}
The previous Proposition and (\ref{lemma}) give us the desired expression
for ${\mathrm{w}} \tilde{\xi}_{\sigma}|\tau$.
\end{proof}

 We  define the weighted  Schubert polynomial ${\mathrm{w}}
\mathfrak{S}_{\sigma}(x)$ as
\begin{equation*}
{\mathrm{w}} \mathfrak{S}_{\sigma}(x,0)=\mathfrak{S}_{\sigma}(x,-(w_1/u)x_{
\mathrm{id}},\ldots).
\end{equation*}

\begin{prop}
The polynomials ${\mathrm{w}}\mathfrak{S}_{\sigma }(x)$, $\sigma \in W^{P}$
form a ${\mathbb{Q}}$-basis of ${\mathbb{Q}}[x_{1},\dotsc ,x_{n}]^{W_P}/I,$ where $%
I$ is the ideal generated by $$f(x_{1},\ldots ,x_{r})-f(\frac{-w_{i}}{u}x_{\mathrm{id}%
},\ldots ,\frac{-w_{r}}{u}x_{\mathrm{id}})$$ \ for all
$W$-invariant polynomials $f$ of positive degree. Moreover,  there is
a surjective ring homomorphism
\begin{equation*}
\dfrac{\mathbb{Q}[x_{1},\ldots,x_{n}]^{W_P}}{I}\rightarrow H^{\ast }({\mathrm{w}}\Sigma )
\end{equation*}
defined as
\begin{equation*}
{\mathrm{w}}\mathfrak{S}_{\sigma }(x)\mapsto {\mathrm{w}}\xi _{\sigma ^{-1}}.
\end{equation*}
\end{prop}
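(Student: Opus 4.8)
The plan is to realize $H^{*}({\w}\Sigma)$ as the specialization of the equivariant cohomology ring at the zero value of the equivariant parameters, and to identify the resulting presentation with $\QQ[x]^{W_P}/I$. First I would pass from the equivariant to the ordinary ring. By equivariant formality (the module isomorphism proved above) the restriction $H^{*}_{T_{\w}}({\w}\Sigma)\to H^{*}({\w}\Sigma)$ is a surjective ring map with kernel the ideal generated by the parameters, so $H^{*}({\w}\Sigma)\cong H^{*}_{T_{\w}}({\w}\Sigma)/(y_1^{\w},\dots,y_n^{\w})$. Composing the surjection $\theta_{\w}\colon\QQ[x,b^{\w}]\to H^{*}_{T_{\w}}({\w}\Sigma)$ of Proposition~\ref{prop:div} with this quotient and noting $b_l^{\w}\mapsto y_l^{\w}\mapsto 0$, the map factors through $\QQ[x,b^{\w}]/(b^{\w})\cong\QQ[x]$, giving a surjection $\bar\theta_{\w}\colon\QQ[x]\to H^{*}({\w}\Sigma)$ with ${\w}\mathfrak{S}_{\sigma}(x)={\w}\mathfrak{S}_{\sigma}(x,0)\mapsto {\w}\xi_{\sigma^{-1}}$ (by Theorem~\ref{theta}). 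Since $\mathfrak{S}_{\sigma}(x,b)$ with $\sigma\in W^P$ is symmetric in $x_i,x_{i+1}$ whenever $s_i\in W_P$, and the substitution $b_l\mapsto b_l^{\w}-(w_l/u)x_{\mathrm{id}}$ only feeds in the $W_P$-invariant element $x_{\mathrm{id}}$, each ${\w}\mathfrak{S}_{\sigma}(x)$ lies in $\QQ[x]^{W_P}$; thus $\bar\theta_{\w}$ restricts to a surjection $\QQ[x]^{W_P}\to H^{*}({\w}\Sigma)$.

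Next I would verify $I\subseteq\ker\bar\theta_{\w}$. Working in $H^{*}_K(\a\Sigma^{\times})$, the double-coinvariant presentation of $H^{*}_T(\Sigma)$ underlying Theorem~\ref{theta} gives $\theta_{\a}(f(x))=f(y_1,\dots,y_n)$ for every $W$-invariant $f$, while Corollary~\ref{homo} gives $\theta_{\a}(x_{\mathrm{id}})=z$. Substituting $y_i=y_i^{\w}-(w_i/u)z$ from \eqref{yw} yields
\begin{equation*}
\theta_{\a}(f(x))=f\!\left(y_1^{\w}-\tfrac{w_1}{u}z,\dots,y_n^{\w}-\tfrac{w_n}{u}z\right),
\end{equation*}
while $\theta_{\a}\!\big(f(-\tfrac{w_1}{u}x_{\mathrm{id}},\dots,-\tfrac{w_n}{u}x_{\mathrm{id}})\big)=f(-\tfrac{w_1}{u}z,\dots,-\tfrac{w_n}{u}z)$. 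Their difference lies in $(y_1^{\w},\dots,y_n^{\w})$; transporting through the isomorphism $(\Pi_{\w}^{*})^{-1}$ and reducing modulo the parameters shows $\bar\theta_{\w}\big(f(x)-f(-\tfrac{w}{u}x_{\mathrm{id}})\big)=0$, so every generator of $I$ is killed and $\bar\theta_{\w}$ descends to a surjection $\QQ[x]^{W_P}/I\to H^{*}({\w}\Sigma)$.

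It then remains to match dimensions. Since $f\mapsto f(-\tfrac{w}{u}x_{\mathrm{id}})$ is a ring homomorphism, $I$ is generated by the $n$ homogeneous elements $g_i:=e_i(x)-c_i\,x_{\mathrm{id}}^{\,i}$ with $c_i:=e_i(-w_1/u,\dots,-w_n/u)$, of degrees $1,\dots,n$. As $W_P\cong S_{d_1}\times S_{d_2-d_1}\times\cdots\times S_{d_r-d_{r-1}}$ is a reflection group, $\QQ[x]^{W_P}$ is a polynomial ring of Krull dimension $n$, so $g_1,\dots,g_n$ is a regular sequence as soon as its common zero locus is $\{0\}$. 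Any common zero $x$ satisfies $e_i(x)=e_i(-\tfrac{w_1}{u}s,\dots,-\tfrac{w_n}{u}s)$ with $s=x_{\mathrm{id}}(x)$, hence $x$ is a permutation of $(-w_1 s/u,\dots,-w_n s/u)$; substituting this into $x_{\mathrm{id}}(x)=s$ gives $-\tfrac{s}{u}(w_{\rho}-u)=s$ for the corresponding $\rho$, forcing $s=0$ unless $w_{\rho}=0$, which the positivity assumption $w_Y>0$ excludes. Thus the only common zero is the origin, $\QQ[x]^{W_P}/I$ is a complete intersection with the same Hilbert series as the classical coinvariant ring $\QQ[x]^{W_P}/(e_1,\dots,e_n)$, and in particular $\dim_{\QQ}\QQ[x]^{W_P}/I=|W^P|$. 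By Proposition~\ref{Cohomology} this equals $\dim_{\QQ}H^{*}({\w}\Sigma)$, so the surjection is an isomorphism. Finally, the classes $\{{\w}\xi_{\sigma^{-1}}\}_{\sigma\in W^P}$ form a $\QQ$-basis of $H^{*}({\w}\Sigma)$ (the images of the weighted Schubert module basis under equivariant formality) and are exactly the images of the ${\w}\mathfrak{S}_{\sigma}(x)$; being $|W^P|$ linearly independent elements in a space of dimension $|W^P|$, the ${\w}\mathfrak{S}_{\sigma}(x)$ form a $\QQ$-basis of $\QQ[x]^{W_P}/I$, proving both assertions.

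The hard part is the regular-sequence step: the whole argument hinges on the common zero locus of $g_1,\dots,g_n$ collapsing to the origin, and this is precisely where the defining positivity $w_Y>0$ of the weighting is used. The identity $\theta_{\a}(f(x))=f(y)$ in $H^{*}_K(\a\Sigma^{\times})$ is the other point requiring care, but it is inherited from the standard presentation of $H^{*}_T(\Sigma)$ that already underlies Theorem~\ref{theta}.
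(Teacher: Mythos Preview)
Your argument is correct, but it takes a different and more laborious route than the paper's. The paper simply quotes the Borel presentation
\[
H^{*}_{T}(\Sigma)\;\cong\;\dfrac{\QQ[x_1,\dots,x_n]^{W_P}\otimes\QQ[b_1,\dots,b_n]}{\langle\, f(x)-f(b)\ :\ f\in\QQ[x]^{W}_{+}\,\rangle}
\]
together with the fact that the double Schubert polynomials $\{\mathfrak{S}_\sigma(x,b)\}_{\sigma\in W^P}$ form a $\QQ[b]$-basis, and then observes that the change of variables $b_l\mapsto b_l^{\w}-(w_l/u)x_{\mathrm{id}}$ followed by the specialization $b^{\w}=0$ carries this presentation and basis directly to $\QQ[x]^{W_P}/I$ with basis $\{{\w}\mathfrak{S}_\sigma(x)\}$. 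No dimension count or regular-sequence argument is needed: a free-module basis over $\QQ[b^{\w}]$ specializes to a $\QQ$-basis modulo $(b^{\w})$ automatically.

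Your approach instead builds the surjection $\QQ[x]^{W_P}/I\twoheadrightarrow H^{*}({\w}\Sigma)$ by hand from $\theta_{\w}$ and equivariant formality, and then upgrades it to an isomorphism by an independent dimension count, showing that the generators $g_i=e_i(x)-e_i(-\tfrac{w}{u}x_{\mathrm{id}})$ form a regular sequence in $\QQ[x]^{W_P}$ via an explicit zero-locus computation. This works, and has the virtue of making the role of the positivity hypothesis $w_Y>0$ completely explicit: it is exactly what forces the common zero to collapse to the origin. (One small remark: the permutation $\rho$ arising in your zero-locus step need not lie in $W^P$, but $w_\rho$ depends only on the coset $\rho W_P$, so positivity for all $Y\in\mathcal{Y}$ still gives $w_\rho>0$.) The trade-off is that the paper's specialization argument is shorter, is indifferent to positivity for the purely algebraic basis statement, and avoids the analysis of the variety $V(g_1,\dots,g_n)$ altogether.
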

\proof  By Borel Construction \cite[p.24]{guillemin2006gkm}, it is well-known that
$$H^*_T(\Si)\cong \dfrac{{\mathbb{Q}[x_{1},\dotsc ,x_{n}]}^{W_P}\otimes \mathbb{Q}[b_{1},\dotsc ,b_{n}] }{I},$$
 where $
I$ is the ideal generated by $f(x_{1},\ldots ,x_{r})-f(b_1,\dots,b_r)$   for all
$W$-invariant polynomials $f$ of positive degree.
Also, the double Schubert polynomials $\{\mathfrak{S}_{\sigma }(x,b)\  :\ \si\in W_P\}$ form a basis for  $$\dfrac{{\mathbb{Q}[x_{1},\dotsc ,x_{n}]}^{W_P}\otimes \mathbb{Q}[b_{1},\dotsc ,b_{n}] }{I}.$$
Thus the result follows from the definition of weighted Schubert polynomial.

\begin{teo}
\label{Th:Chev-Mon}
For any simple reflection $s_{d}\in W^P$ and $\sigma\in W^P$, the
Chevalley--Monk's formula for weighted double Schubert polynomials is given
by
\begin{equation*}
{\mathrm{w}}\mathfrak{S}_{s_d}(x){\mathrm{w}}\mathfrak{S}_{\sigma}(x)= \frac{
\sum_{i=1}^{d}(w_{i}-w_{\sigma_i})}{u}x_{\mathrm{id}}{\mathrm{w}}\mathfrak{S}
_{\sigma}(x)+\mathop{\sum_{i\leq d< j}}\limits_{l((ij)\sigma)=l(\sigma)+1}{
\mathrm{w}}\mathfrak{S}_{(ij)\sigma}(x).
\end{equation*}
\end{teo}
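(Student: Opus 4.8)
The plan is to obtain the Chevalley--Monk identity as the non-equivariant shadow of the weighted Chevalley's formula of Theorem~\ref{Th:Chev}, read through the polynomial dictionary set up in Theorem~\ref{theta} and Proposition~\ref{prop:div}. Recall that $\theta_{\mathrm{w}}$ sends the weighted double Schubert polynomial ${\mathrm{w}}\mathfrak{S}_{\sigma}(x,b^{\mathrm{w}})$ to ${\mathrm{w}}\tilde{\xi}_{\sigma^{-1}}$, and that specialising $b^{\mathrm{w}}=0$ produces the single polynomial ${\mathrm{w}}\mathfrak{S}_{\sigma}(x)$, which under the map $\Theta\colon{\mathbb{Q}}[x_{1},\dots,x_{n}]^{W_P}/I\to H^{*}({\mathrm{w}}\Sigma)$ of the preceding Proposition is carried to the ordinary weighted class ${\mathrm{w}}\xi_{\sigma^{-1}}$. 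Since $\Theta$ is surjective and matches the two $W^{P}$-indexed ${\mathbb{Q}}$-bases, it is a ring isomorphism; hence it suffices to verify the claimed identity after applying $\Theta$, that is, inside $H^{*}({\mathrm{w}}\Sigma)$.

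First I would specialise Theorem~\ref{Th:Chev} to ordinary cohomology by the augmentation $y^{\mathrm{w}}_{i}\mapsto 0$ (equivalently the projection $H^{*}_{T_{\mathrm{w}}}({\mathrm{w}}\Sigma)\to H^{*}({\mathrm{w}}\Sigma)$ coming from the module splitting established in \S\ref{S-EqCoh}). The structure-constant summand $\sum_{i\le d<j,\,l((ij)\sigma)=l(\sigma)+1}{\mathrm{w}}\tilde{\xi}_{(ij)\sigma}$ descends verbatim to the analogous sum of ordinary classes. The coefficient of ${\mathrm{w}}\tilde{\xi}_{\sigma}$, namely $\sum_{j=1}^{d}\bigl(y^{\mathrm{w}}_{\sigma(j)}-y^{\mathrm{w}}_{j}-\tfrac{w_{\sigma(j)}-w_{j}}{u}z\bigr)$, collapses under $y^{\mathrm{w}}\mapsto 0$ to $\tfrac{1}{u}\sum_{j=1}^{d}(w_{j}-w_{\sigma(j)})\,z$. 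Because $\theta_{\mathrm{a}}(x_{\mathrm{id}})=z$ by Corollary~\ref{homo}, the degree-two class $z$ survives the augmentation as $\Theta(x_{\mathrm{id}})$, so this coefficient becomes exactly $\tfrac{1}{u}\sum_{j=1}^{d}(w_{j}-w_{\sigma_{j}})\,x_{\mathrm{id}}$, matching the statement. Transporting the resulting identity back through $\Theta^{-1}$ gives the asserted equality in ${\mathbb{Q}}[x]^{W_P}/I$.

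The step I expect to be the main obstacle is the permutation bookkeeping. Since $\theta_{\mathrm{w}}$ inverts the index (${\mathrm{w}}\mathfrak{S}_{\sigma}\leftrightarrow{\mathrm{w}}\tilde{\xi}_{\sigma^{-1}}$) while the transpositions in Theorem~\ref{Th:Chev} act on the left, i.e.\ on values as in the covering relation of \S\ref{sec:CellDec}, one must conjugate the whole formula by $\sigma\mapsto\sigma^{-1}$ and check that the summation index $(ij)\sigma$, the length condition, and the coefficient $\sum_{i=1}^{d}(w_{i}-w_{\sigma_{i}})$ all emerge in precisely the stated left-multiplication form. This includes confirming that $\sigma\mapsto\sigma^{-1}$ is compatible with the minimal-length representatives indexing the ${\mathrm{w}}\xi$, so that every term on the right-hand side is again a genuine weighted Schubert class and no class outside $W^{P}$ is introduced.

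As an independent check that does not pass through $\Theta$, I would also run the purely polynomial route: start from the classical Monk's formula for the double Schubert polynomials $\mathfrak{S}_{s_{d}}(x,b)\mathfrak{S}_{\sigma}(x,b)$, apply the ring isomorphism ${\mathbb{Q}}[x,b]\cong{\mathbb{Q}}[x,b^{\mathrm{w}}]$ of \eqref{isomor}, and then set $b^{\mathrm{w}}=0$, i.e.\ $b_{l}=-(w_{l}/u)x_{\mathrm{id}}$. Carrying out the substitution in the linear coefficient reproduces $\tfrac{1}{u}\sum_{i=1}^{d}(w_{i}-w_{\sigma_{i}})\,x_{\mathrm{id}}$ after reduction modulo $I$, which both confirms the equivariant specialisation above and pins down the surviving $x_{\mathrm{id}}$-term unambiguously.
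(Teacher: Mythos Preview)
Your ``independent check'' in the final paragraph is precisely the paper's proof: one quotes Monk's formula for double Schubert polynomials
\[
\mathfrak{S}_{s_d}(x,b)\,\mathfrak{S}_{\sigma}(x,b)=\bigl(b_{\sigma_1}+\cdots+b_{\sigma_d}-b_1-\cdots-b_d\bigr)\mathfrak{S}_{\sigma}(x,b)+\sum_{\substack{i\le d<j\\ l((ij)\sigma)=l(\sigma)+1}}\mathfrak{S}_{(ij)\sigma}(x,b)
\]
and substitutes $b_i\mapsto -(w_i/u)\,x_{\mathrm{id}}$. By the very definition of ${\mathrm w}\mathfrak{S}_\sigma(x)$ this is already the desired identity in $\mathbb{Q}[x]$; no passage to $\mathbb{Q}[x]^{W_P}/I$ is needed.

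By contrast, your primary route through Theorem~\ref{Th:Chev} and the isomorphism $\Theta$ is more laborious and, as you yourself note, leaves the permutation bookkeeping unresolved. Under $\Theta$ the class ${\mathrm w}\mathfrak{S}_{(ij)\sigma}$ corresponds to ${\mathrm w}\xi_{\sigma^{-1}(ij)}$, whereas Theorem~\ref{Th:Chev} applied to $\sigma^{-1}$ produces terms ${\mathrm w}\xi_{(ij)\sigma^{-1}}$; matching these two indexing sets (and the coefficient, which involves $w_{\sigma^{-1}(j)}$ rather than $w_{\sigma(j)}$) requires an extra argument you have not supplied. There is also a small wrinkle in your specialisation step: the $z$ appearing in Theorem~\ref{Th:Chev} lives in $H^*(BK)$, not in $H^*(BT_{\mathrm w})$, so ``setting $y^{\mathrm w}\mapsto 0$'' does not literally kill the equivariant parameters while leaving $z$ intact; one has to work through $\Pi_{\mathrm w}^*$ carefully. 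None of this is needed once you take the direct polynomial substitution, which is both shorter and free of these issues.
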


\begin{proof}
From\cite{manivel2001symmetric}, we have a nice formula for the product of
double Schubert polynomials

\begin{equation*}
\mathfrak{S}_{s_d}(x,b)\mathfrak{S}_{\sigma}(x,b)=(b_{\sigma_1}+\cdots+b_{
\sigma_{d}}-b_1-\cdots-b_d)\mathfrak{S}_{\sigma}(x,b)+\mathop{\sum_{i\leq d<
j}}\limits_{l((ij)\sigma)=l(\sigma)+1}\mathfrak{S}_{(ij)\sigma}(x,b)
\end{equation*}
Replacing $b_i$ by $-\frac{w_i}{u}x_{\mathrm{id}}$ in above equation, we get
the result.
\end{proof}


As a concluding remark, it is opportune to mention that the geometric and
topological aspects (including Schubert calculus) of weighted flag
varieties, as a subject is out there to be explored. We expect this to grow
into an area with potential research directions, for instance \cite{ANQ}.
The next obvious question seems to be of computing cohomology of complete
intersections in weighted flag orbifolds. Similarly we hope that results
from equivariant $K$-theory and equivariant quantum cohomology of
homogeneous spaces could also be generalized to these spaces.

\subsection*{Acknowledgement}

We wish to thank Waqar Ali Shah for several helpful discussions. We also
thank Frank Sottile and Bal{\'a}zs Szendr{\H o}i for their comments on
earlier drafts of this article. Thanks are also due to Shizu Kaji, Allen
Knutson and Loring Tu for some useful conversations. Last but not least, we are indebted to the anonymous referee
for his/her comments and suggestions which significantly improved the earlier version of this paper.
HA and MIQ were supported by the HEC's NRPU research grant ``5906/Punjab/NRPU/HEC/2016".
MIQ was on a fellowship of Alexander--Von--Humboldt foundation during a part
of this paper. 
\bibliographystyle{amsalpha}
\bibliography{References}

\end{document}